\newtheorem{thm}{Theorem}
\newtheorem*{thm*}{Theorem}
\newtheorem{prop}[thm]{Proposition}
\newtheorem{lemma}[thm]{Lemma}
\newtheorem*{mainthm*}{Main Theorem}
\newtheorem*{mainlemma*}{Main Lemma}
\newtheorem{cor}[thm]{Corollary}
\newtheorem*{conj*}{Conjecture}
\theoremstyle{definition}
\newtheorem*{tgtfeat*}{Target features}
\newtheorem*{que*}{Question}
\newtheorem*{aim*}{Aim}
\newtheorem*{diag*}{Diagnosis}
\newtheorem*{mque*}{Main Question}
\newtheorem*{rem*}{Remark}
\newtheorem*{appr*}{Approach}
\newtheorem*{Copernican*}{Copernican Principle}
\newtheorem*{Correctness*}{Correctness Condition}
\newcommand{\Lang}{\mathcal{L}}
\newcommand{\Atom}{\mathrm{Atom}}
\newcommand{\Term}{\mathrm{Term}}
\newcommand{\ev}{\mathrm{ev}}
\newcommand{\Var}{\mathrm{Var}}
\newcommand{\udot}[1]{\text{\d{${#1}$}}}
\newcommand{\gquote}[1]{\ulcorner #1 \urcorner}
\newcommand{\sbt}{\mathrm{sbt}}
\newcommand{\gq}{\mathrm{gq}}
\newcommand{\CK}{\mathrm{CK}}
\newcommand{\df}{\mathrm{df}}
\newcommand{\Tr}{\mathrm{T}}
\newcommand{\Ag}{\mathrm{Ag}}
\newcommand{\Kn}{\mathrm{K}}
\newcommand{\dt}{\hspace{2pt}}
\newcommand{\PA}{\mathsf{PA}}
\newcommand{\UCT}{\mathsf{UCT}}
\newcommand{\FS}{\mathsf{FS}}
\newcommand{\Base}{\mathsf{Base}}
\newcommand{\KT}{\mathsf{KT}}
\newcommand{\DCB}{\mathsf{DCB}}
\newcommand{\NEC}{\mathsf{NEC}}
\newcommand{\CONEC}{\mathsf{CONEC}}
\title{A genuinely untyped solution to the knower paradoxes}
\author{Paul Gorbow}
\date{}                                           %
\begin{document}

\maketitle

\section{Introduction}

In \cite{KM60}, Kaplan and Montague showed that certain intuitive axioms for a first-order theory of knowledge, formalized as a predicate, are jointly inconsistent. A similar result was established by Montague in \cite{Mon63}. Both arguments rely on self-referential formulas. I offer a consistent first-order theory solving these {\em knower paradoxes}, with the following main features:
\begin{itemize}
\item It solves the knower paradoxes by providing a faithful formalization of the principle of veracity (that knowledge requires truth), using both a knowledge and a truth predicate.
\item It is genuinely untyped. I.e. it is untyped not only in the sense that it uses a single knowledge predicate applying to all sentences in the language (including sentences in which this predicate occurs), but in the sense that its axioms quantify over all sentences in the language, thus supporting comprehensive reasoning with untyped knowledge ascriptions.
\item Common knowledge predicates can be defined in the system using self-reference. This fact, together with the genuinely untyped nature of the system and a technique based on L\"ob's theorem, enables it to support comprehensive reasoning with untyped common knowledge ascriptions (without having any axiom directly addressing common knowledge).
\end{itemize}

Let $L$ be the language of Peano Arithmetic ($\PA$), and let $L_\mathrm{K}$ be $L$ augmented with a predicate $\mathrm{K}$ for knowledge. Utilizing G\"odel's implementation of syntax in arithmetic (from \cite{God31}), any formula $\phi$ in $L_\mathrm{K}$ is represented by a term $\gquote{\phi}$ in $L$. Suppose that $\phi$ is a sentence. Now ``the agent knows $\phi$'', ``we know $\phi$'', or suchlike, can be formalized as $\mathrm{K}(\gquote{\phi})$. Moreover, there is a formula $\Pr_\PA$ in $L$, such that ``$\phi$ is provable in $\PA$'' is represented by $\Pr_\PA(\gquote{\phi})$. In this context, the above results of Kaplan and Montague can be stated as follows:

\begin{thm*}[Kaplan \& Montague, 1960]
The $L_\mathrm{K}$-theory extending Peano Arithmetic with the following axiom schemata is inconsistent, where $\phi, \psi$ range over sentences in $L_\mathrm{K}$:
\begin{align*}
& \mathrm{K}(\gquote{\phi}) \rightarrow \phi \tag{$\mathsf{UT}^\Kn$} \\
& \mathrm{K}(\gquote{\mathrm{K}(\gquote{\phi}) \rightarrow \phi}) \tag{$\Kn[\mathsf{UT}^\Kn]$} \\
& {\Pr}_\PA(\gquote{\phi \rightarrow \psi}) \wedge \mathrm{K}(\gquote{\phi}) \rightarrow \mathrm{K}(\gquote{\psi}) \tag{$\mathsf{I}^\Kn$}
\end{align*}
\end{thm*}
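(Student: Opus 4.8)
The plan is to reconstruct the knower's paradox by Gödelian self-reference, in the spirit of Montague's argument that a ``necessity'' predicate obeying analogous closure principles is inconsistent. Write $T$ for the $L_\mathrm{K}$-theory described in the statement; since $T$ extends $\PA$, every arithmetical theorem of $\PA$ is available inside $T$. First I would apply the diagonal lemma in $\PA$ to obtain a sentence $\delta$ of $L_\mathrm{K}$ with
\[
\PA \vdash \delta \leftrightarrow \neg\mathrm{K}(\gquote{\delta}),
\]
so that $\delta$ ``asserts of itself that it is not known''. The whole argument then uses only this single fixed point, together with the instances of $\mathsf{UT}^\Kn$ and $\Kn[\mathsf{UT}^\Kn]$ at $\phi = \delta$ and the instance of $\mathsf{I}^\Kn$ at $\phi = (\mathrm{K}(\gquote{\delta}) \to \delta)$, $\psi = \delta$.

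The second step is a purely arithmetical observation: $\PA \vdash (\mathrm{K}(\gquote{\delta}) \to \delta) \to \delta$. Indeed, reasoning in $\PA$, assume $\mathrm{K}(\gquote{\delta}) \to \delta$; if $\delta$ failed then $\mathrm{K}(\gquote{\delta})$ by the fixed-point biconditional, whence $\delta$ by the assumption, a contradiction, so $\delta$ holds. Because $\PA$ proves this sentence, provable $\Sigma_1$-completeness gives $\PA \vdash \Pr_\PA(\gquote{(\mathrm{K}(\gquote{\delta}) \to \delta) \to \delta})$, hence $T$ proves it too.

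Now for the core deduction, carried out in $T$. From the instance $\mathrm{K}\bigl(\gquote{\mathrm{K}(\gquote{\delta}) \to \delta}\bigr)$ of $\Kn[\mathsf{UT}^\Kn]$, the provability fact just established, and the instance of $\mathsf{I}^\Kn$ with $\phi := (\mathrm{K}(\gquote{\delta}) \to \delta)$ and $\psi := \delta$, modus ponens yields $\mathrm{K}(\gquote{\delta})$. On the other hand, the instance $\mathrm{K}(\gquote{\delta}) \to \delta$ of $\mathsf{UT}^\Kn$ together with $\mathrm{K}(\gquote{\delta})$ yields $\delta$, and then the fixed-point biconditional yields $\neg\mathrm{K}(\gquote{\delta})$. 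This contradicts $\mathrm{K}(\gquote{\delta})$, so $T$ is inconsistent.

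I do not expect a serious obstacle: the entire content lies in choosing the fixed point $\delta$ and the right instances of the three schemata. The one step to state and invoke with care is the ``metatheoretic'' passage from $\PA \vdash \sigma$ to $\PA \vdash \Pr_\PA(\gquote{\sigma})$, i.e.\ provable $\Sigma_1$-completeness of $\PA$; everything else is propositional manipulation of the biconditional defining $\delta$. One should also check the routine bookkeeping that $\delta$ and $\mathrm{K}(\gquote{\delta}) \to \delta$ are genuine sentences of $L_\mathrm{K}$, so that the schema instances used are legitimate.
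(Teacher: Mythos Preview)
Your argument is correct. The paper does not give a full proof of this theorem; it only remarks that ``the former uses a sentence $\kappa$, such that $\kappa \leftrightarrow \Kn(\gquote{\neg\kappa})$, while the latter uses a sentence $\delta$, such that $\delta \leftrightarrow \neg\Kn(\gquote{\delta})$.'' So the paper attributes the fixed point $\kappa$ (``the negation of this is known'') to Kaplan--Montague and reserves your fixed point $\delta$ (``this is not known'') for Montague's 1963 result. You have instead run the Kaplan--Montague argument with $\delta$.

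This is not a substantive divergence: the two fixed points are dual, since if $\kappa \leftrightarrow \Kn(\gquote{\neg\kappa})$ then $\neg\kappa$ satisfies exactly the defining biconditional of $\delta$. The derivation with $\kappa$ proceeds in the same shape---one first observes in $\PA$ that $(\Kn(\gquote{\neg\kappa}) \to \neg\kappa) \to \neg\kappa$, then uses $\Kn[\mathsf{UT}^\Kn]$ and $\mathsf{I}^\Kn$ to obtain $\Kn(\gquote{\neg\kappa})$, hence $\kappa$ by the fixed point, while $\mathsf{UT}^\Kn$ yields $\neg\kappa$. Your choice of $\delta$ arguably makes the propositional bookkeeping slightly cleaner. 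One minor comment: for the step $\PA \vdash \sigma \Rightarrow \PA \vdash \Pr_\PA(\gquote{\sigma})$ you invoke ``provable $\Sigma_1$-completeness,'' but all you actually need is that $\Pr_\PA$ represents $\PA$-provability (the first derivability condition), which is weaker and exactly what is required here.
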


\begin{thm*}[Montague, 1963]
The $L_\mathrm{K}$-theory extending Peano Arithmetic with the following axiom schema and deductive rule is inconsistent, where $\phi$ ranges over sentences in $L_\mathrm{K}$:
\begin{align*}
& \mathrm{K}(\gquote{\phi}) \rightarrow \phi \tag{$\mathsf{UT}^\Kn$} \\
& \vdash \phi \phantom{~~} \Longrightarrow \phantom{~~} \vdash \mathrm{K}(\gquote{\phi}) \tag{$\NEC^\mathrm{K}$}
\end{align*}
\end{thm*}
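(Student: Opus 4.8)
The plan is to derive a contradiction from $\mathsf{UT}^\Kn$ and $\NEC^\mathrm{K}$ by constructing a self-referential ``knower sentence'' that asserts its own unknowability. First I would invoke the diagonal lemma (available in $\PA$, via G\"odel's arithmetization of syntax) to obtain a sentence $\lambda$ of $L_\mathrm{K}$ such that $\PA \vdash \lambda \leftrightarrow \neg \mathrm{K}(\gquote{\lambda})$. The sentence $\lambda$ thus says, informally, ``$\lambda$ is not known.'' All subsequent reasoning takes place in the $L_\mathrm{K}$-theory $\theory$ extending $\PA$ by the schema $\mathsf{UT}^\Kn$ and closed under the rule $\NEC^\mathrm{K}$.

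The key steps, in order, are as follows. (1) From the instance $\mathrm{K}(\gquote{\lambda}) \rightarrow \lambda$ of $\mathsf{UT}^\Kn$ together with the diagonal equivalence $\lambda \leftrightarrow \neg \mathrm{K}(\gquote{\lambda})$, derive $\mathrm{K}(\gquote{\lambda}) \rightarrow \neg \mathrm{K}(\gquote{\lambda})$ by propositional logic, hence $\theory \vdash \neg \mathrm{K}(\gquote{\lambda})$, and then $\theory \vdash \lambda$ again via the diagonal equivalence. (2) Since $\lambda$ has now been proved in $\theory$, apply the rule $\NEC^\mathrm{K}$ to conclude $\theory \vdash \mathrm{K}(\gquote{\lambda})$. (3) But step (1) already gave $\theory \vdash \neg \mathrm{K}(\gquote{\lambda})$, so $\theory$ proves both $\mathrm{K}(\gquote{\lambda})$ and its negation, and is therefore inconsistent. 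Note that, unlike the Kaplan--Montague theorem, this argument uses neither the internalized reflection axiom $\Kn[\mathsf{UT}^\Kn]$ nor the closure-under-provable-implication schema $\mathsf{I}^\Kn$; the rule $\NEC^\mathrm{K}$ does the work of both, at the cost of being a rule rather than an axiom, so the inconsistency is of the theory-plus-rule rather than of a plain axiomatized theory.

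I do not expect any step to present a genuine obstacle: this is essentially Montague's knower paradox, structurally a predicate analogue of G\"odel's second incompleteness argument (or of the Knower paradox in the style of the surprise-examination literature). The one point requiring a little care is the precise status of $\NEC^\mathrm{K}$ as a rule of inference: one must make sure the derivation of $\lambda$ in step (1) is an honest proof in $\theory$ (so that the rule legitimately fires in step (2)), and that $\theory$ is understood as the smallest $L_\mathrm{K}$-theory containing $\PA$ and all instances of $\mathsf{UT}^\Kn$ and closed under $\NEC^\mathrm{K}$ — so that ``$\vdash \phi$'' in the statement of the rule means provability in this very theory. Once that is pinned down, steps (1)--(3) are routine propositional and diagonal manipulations, and the whole proof is only a few lines.
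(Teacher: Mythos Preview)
Your proposal is correct and follows exactly the route the paper indicates: the paper does not spell out the argument in full but remarks that Montague's proof ``uses a sentence $\delta$, such that $\delta \leftrightarrow \neg \Kn(\delta)$,'' which is precisely your $\lambda$, and your steps (1)--(3) are the standard derivation from that fixed point.
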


$\mathsf{UT}^\Kn$ formalizes the classical philosophical principle of veracity (that knowledge requires truth) and stands for {\em Untyped T axiom for $\Kn$} (as it is an untyped analogue of the axiom $\mathsf{T}$ from modal logic); $\NEC^\Kn$ formalizes that deduction in the system is a legitimate method for generating knowledge and stands for {\em Necessitation for $\Kn$}; $\mathsf{I}^\Kn$ formalizes that knowledge is closed under deduction in $\PA$ and derives from notation in \cite{KM60}. Note that $\Kn[\mathsf{UT}^\Kn]$ follows from a single application of $\NEC^\Kn$ to $\mathsf{UT}^\Kn$. The proofs of these theorems rely on G\"odel's fixed-point lemma. The former uses a sentence $\kappa$, such that $\kappa \leftrightarrow \Kn(\neg \kappa)$, while the latter uses a sentence $\delta$, such that $\delta \leftrightarrow \neg \Kn(\delta)$. 

\S \ref{Sec:Prel} sets the notation used throughout the paper and specifies the language and basic assumptions on the theories considered. It also briefly explains G\"odel's implementation of syntax.

In \S \ref{Sec:Theory_KT}, I list several target features for an adequate system of knowledge and truth---a design specification, if you will---consisting of the main features above along with several other desirable features. I then introduce the first-order axiomatic system $\KT$ of knowledge and truth, and show that it has all these features. In particular, $\KT$ formalizes veracity as an axiom expressing that for all sentences $s$ in the language, $\Kn(s) \rightarrow \Tr(s)$, where $\Tr$ is a truth predicate. As it has further axioms and rules from which $\NEC^\Kn$ and $\mathsf{I}^\Kn$ can be derived, and is consistent (as explained below), it is a solution to the knower paradoxes.

A key design choice, motivated by the desire to obtain a genuinely untyped system, is that $\KT$ is axiomatized as an extension of the Friedman--Sheard system of truth ($\FS$, introduced in \cite{FS87}). Halbach showed in \cite{Hal94} that $\FS$ is locally modeled by the revision semantics of Gupta and Herzberger (from \cite{Gup82, Her82a, Her82b}), in the sense that every finite fragment of $\FS$ is satisfied by an expansion of the standard model of arithmetic obtained by a finite number of such revisions of the interpretation of the truth predicate. In \citep{Ste14}, Stern developed a version of the revision semantics for necessity, treated as a first-order predicate rather than as a modal operator, which generalizes the usual possible worlds semantics. In \S \ref{Sec:Semantics}, I utilize Stern's technique to develop a revision semantics for multi-agent knowledge. This approach generalizes Hintikka's possible worlds approach from \cite{Hin62}, and is thus coherent with that tradition. The consistency of $\KT$ comes out as a corollary of Theorem \ref{Thm:KT_semantics}, which shows that a certain extension of $\KT$ is locally modeled by this revision semantics.

$\KT$ has a self-referential axiom $\mathsf{R}_\DCB$, asserting that knowledge is deductively closed in a robust sense; it is defined as a G\"odel fixed point. On top of its intuitive appeal, the value of this axiom lies in that it harmonizes both with the revision semantics and with an interesting proof-technique based on L\"ob's theorem. The latter enables me to show that $\KT$ supports expedient reasoning about common knowledge defined as a G\"odel fixed-point (without assuming any principles beyond ordinary knowledge). In particular, I show that the defined common knowledge predicate is unique and deductively closed, in certain precise senses. These results are explained in \S \ref{Sec:DCB_CK_well-behaved}.

In summary, the over-arching theme is that the knower paradoxes are solved in a genuinely untyped manner; particular innovations include the faithful formalization of veracity, the generalization of the revision semantics to multi-agent knowledge, the robust formalization of deductive closure as a G\"odel fixed-point along with a proof that this can be validated in the revision semantics, and the definition of common knowledge as a G\"odel fixed-point along with proofs that this defined predicate is unique and deductively closed.

\section{Formal notation and preliminaries}\label{Sec:Prel}

Let $L$ be a first-order language. An {\em $L$-system} is defined as a recursively enumerable set of $L$-sentences that includes (a Hilbert-style) axiomatization of first-order logic and is closed under modus ponens. Each systems is given as a list of axioms and deductive rules, such as the deductive rule $\NEC^\Kn$ above. Note that if a system $S$ with a deductive rule, say $\NEC^\Kn$, is extended to a system $S'$, then $\NEC^\Kn$ can be applied in $S'$ to any theorem of $S'$. On the other hand, $\NEC^\Kn$ can generally not be applied in $S$ to theorems of $S'$; in other words, it cannot be applied to undischarged assumptions, so it cannot in general be used to derive $\phi \rightarrow \Kn(\gquote{\phi})$, for a sentence $\phi$. I employ the following notation, for any arity $n \in \mathbb{N}$:
\begin{align*}
\Var &=_\df \textnormal{the set of variables.} \\
L^n &=_\df \textnormal{the set of $L$-formulas with precisely $n$ free variables.} \\
L^n[v_1, \cdots, v_n] &=_\df \textnormal{the set of $L$-formulas with precisely $v_1, \cdots, v_n$ as free variables.} \\
\Atom_L^n &=_\df \textnormal{the set of atomic $L$-formulas with precisely $n$ free variables.} \\
\Term_L^n &=_\df \textnormal{the set of $L$-terms with precisely $n$ free variables.} 
\end{align*}
When the superscript $n$ above is dropped, all arities are included, e.g. $\Term_L$ is the set of all $L$-terms. The {\em substitution function}, denoted $\sbt : L \times \Term_\Lang \times \Var \rightarrow \Lang$, maps $\langle \phi, t, v \rangle$ to the formula obtained from $\phi$ by substituting every instance of the variable $v$ by the term $t$.

Let $\Lang$ be the first-order language with the usual symbols $\underline{0}, S, +, \cdot$ of arithmetic, a unary predicate symbol $U$, a unary function symbol $u$, a unary predicate symbol $\Kn^1$, a binary relation symbol $\Kn^2$, a unary predicate symbol $\Tr$, and a unary predicate symbol $\Ag$. When the arity is clear from the context, $\Kn^1$ and $\Kn^2$ will both be denoted $\Kn$. The sublanguage of $\Lang$ generated by the arithmetical symbols is denoted $\Lang_\PA$ and the sublanguage generated by $\underline{0}, S, \cdot, \times, U, u, \Ag$ is denoted $\Lang_-$. All the systems of the paper are formulated as extensions of Peano Arithmetic ($\PA$) in $\Lang$, or a sublanguage thereof. The standard $\Lang_\PA$-model of arithmetic is denoted $\mathbb{N}$.

$\Kn(x, y)$ is intended to express ``the agent $x$ knows the sentence $y$''. $\Kn(y)$ is intended to express ``we know $y$'', or suchlike. $\Tr(y)$ is intended to express ``$y$ is true''. $\Ag(x)$ is intended to express ``$x$ is an agent''. Utilizing the method of coding in arithmetic, the symbols $U, u$ are employed as universal relation and function symbols, available for the sake of generality, ensuring that the systems considered can be seamlessly extended to apply to any recursively enumerable language.\footnote{In the context of arithmetic, if for each $n, k \in \mathbb{N}$, $R^n_k$ is an $n$-ary relation symbol, then each $R^n_k(x_1, \cdots, x_n)$ can be interpreted by $U(\langle S^n(\underline{0}), S^k(\underline{0}), x_1, \cdots, x_n \rangle)$. To be precise, the latter abbreviates $\exists y \dt \big( U(y) \wedge \phi(S^n(\underline{0}), S^k(\underline{0}), x_1, \cdots, x_n, y) \big)$ for a formula $\phi(S^n(\underline{0}), S^k(\underline{0}), x_1, \cdots, x_n, y) \big)$ expressing that $y$ codes the $n$-tuple $\langle S^n(\underline{0}), S^k(\underline{0}), x_1, \cdots, x_n \rangle$.}

As is familiar, G\"odel showed in \cite{God31} how to implement syntactical resources in arithmetic. Here follows some notation and basic properties of this machinery: For any $n \in \mathbb{N}$, $\mathrm{num}(n)$ (or $\underline{n}$) denotes the {\em numeral} $S^n(\underline{0}) \in \Term_{\Lang_\PA}^0$. Let $t \in \Term_\Lang$ and $\phi \in \Lang$. The {\em G\"odel quotes} $\gq(t)$ (or $\gquote{t}$) and $\gq(\phi)$ (or $\gquote{\phi}$) denote numerals which represent $t$ and $\phi$, respectively, in arithmetic. For example, if $\phi \in \Lang^0$, then $\Kn(x, \gquote{\phi})$ expresses ``$x$ knows `$\phi${'}{''}.  Closely related to these are the {\em G\"odel codes} $\mathrm{gc}(t), \mathrm{gc}(\phi)$ of $t$ and $\phi$, respectively, which are the numbers in the standard model interpreting $\gq(t), \gq(\phi)$, respectively. 

In order to conveniently utilize G\"odel's machinery, the systems considered in this paper are formulated as extensions of a {\em base system}, $\Base$. One can get by with Robinson's arithmetic ($\textsf{Q}$) as base system for syntactic purposes. But since proof by induction is so natural, I require that $\Base$ proves all the theorems of $\PA(\Lang)$, which I axiomatize as $\PA$ with its induction schema extended to $\Lang$. I also require that $\Base$ has axioms for the various representations involved in the G\"odel machinery, which I now proceed to explain:

Let $\phi \in \Lang^n$, let $R(x_1, \cdots, x_n)$ be a relation on $\mathbb{N}$ and let $S$ be an $\Lang$-system. $\phi$ {\em represents} $R$ {\em in} $S$ if for all $a_1, \cdots, a_n \in \mathbb{N}$,
\begin{align*}
R(a_1, \cdots, a_n) \Longleftrightarrow S \vdash \phi(\underline{a_1}, \cdots, \underline{a_n}).
\end{align*}
Representation of functions is analogously defined. $\Base$ is assumed to be equipped with various function and relation symbols, representing various syntactically relevant functions and relations. Formally, these symbols are implemented by means of $U$ and $u$ as explained above, and by means of axioms of $\Base$ asserting what relation or function each such symbol represents. These representations are denoted with a dot under the name of the relation/function in question. For example, $\udot\Lang^0$ denotes the representation of $\Lang^0$, and $\udot \wedge$ represents a function from $\Lang \times \Lang$ to $\Lang$ mapping two formulas to their conjunction. The evaluation function returning the numeric value of any closed $\Lang_\PA$-term is denoted $\ev : \Term_{\Lang_\PA}^0 \rightarrow \mathbb{N}$. Its representation $\udot\ev(x)$ is also denoted $x^\circ$.

Since $\Base \vdash \PA$, the G\"odel--Carnap fixed-point lemma is available in $\Base$, facilitating formalization of self-reference. More precisely, for any $\phi(x_1, \cdots, x_{n+1}) \in \Lang^{n+1}$ there is a formula $\theta(x_1, \cdots, x_n) \in \Lang^n$, such that
\begin{align*}
\Base \vdash \forall x_1, \cdots, x_n \big( \theta(x_1, \cdots, x_n) \leftrightarrow \phi(x_1, \cdots, x_n, \gquote{\theta}) \big).
\end{align*}

The G\"odel machinery enables us to formalize statements about knowledge, for example the statement that the knowledge of each agent is closed under modus ponens:
\begin{align*}
\forall \alpha \forall \phi \forall \psi \Big( \big( \Ag(\alpha) \wedge \udot\Lang^0(\phi) \wedge \udot\Lang^0(\psi) \big) \rightarrow \big( \Kn(\alpha, \phi \udot\rightarrow \psi) \rightarrow (\Kn(\alpha, \phi) \rightarrow \Kn(\alpha, \psi)) \big) \Big)
\end{align*}
In order to make such statements more readable, I introduce the following abbreviations, for any formula $\phi$ in one free variable, and any formula $\psi$:
\begin{align*}
\forall x \in \phi \dt \big( \psi(x) \big) &\textnormal{ abbreviates } \forall x \big(\phi(x) \rightarrow \psi(x)\big) \\
\exists x \in \phi \dt \big( \psi(x) \big) &\textnormal{ abbreviates } \exists x \big(\phi(x) \wedge \psi(x)\big) \\
\forall x_1, \cdots x_n \psi  &\textnormal{ abbreviates } \forall x_1 \cdots \forall x_n \psi \\
\exists x_1, \cdots x_n \psi  &\textnormal{ abbreviates } \exists x_1 \cdots \exists x_n \psi 
\end{align*}
These abbreviations may also be combined. For example, the above sentence is written as follows:
\begin{align*}
\forall \alpha \in \Ag \dt \forall \phi, \psi \in \udot\Lang^0 \dt \big( \Kn(\alpha, \phi \udot\rightarrow \psi) \rightarrow (\Kn(\alpha, \phi) \rightarrow \Kn(\alpha, \psi)) \big)
\end{align*}

\section{A system of knowledge and truth}\label{Sec:Theory_KT}

This section introduces and motivates the $\Lang$-system $\KT$ of knowledge and truth, intended to solve the knower paradoxes. There are many possibilities for doing so, and it is therefore helpful to take a step back to consider what features the system as a whole ought to have. 

\begin{tgtfeat*}
I consider the following features especially significant:
\begin{enumerate}
\item\label{tgtfeat:paradox} The system solves the knower paradoxes in that it consistently combines natural formalizations of the philosophical principles of veracity, deductive closure and necessitation in a genuinely untyped framework. \\
{\em Motivation:} It is of fundamental philosophical importance to show that we can safely reason with such an intuitive concept of knowledge in a setting with the self-referential capabilities of natural language. The various components of this target feature are subsumed in the specific Target features \ref{tgtfeat:self-ref}--\ref{tgtfeat:semantics} below. 
\item\label{tgtfeat:self-ref} The system is genuinely untyped, in the sense that it supports reasoning with knowledge ascriptions for arbitrary $\Lang$-formulas (including self-referential ones). \\
{\em Motivation:} Detailed arguments for the legitimacy of self-reference are given by Kripke in \cite{Kri75}, for example:
\begin{displayquote}
{[}G\"odel{]} also showed that elementary syntax can be interpreted in number theory. In this way, G\"odel put the issue of the legitimacy of self-referential sentences beyond doubt; he showed that they are as incontestably legitimate as arithmetic itself.
\end{displayquote}
Moreover self-reference adds quite usefully to the expressive power of the language; indeed it is utilized to formalize deductive closure and to define common knowledge in this paper.
\item\label{tgtfeat:veracity} The system faithfully formalizes the principle of veracity. \\
{\em Motivation:} It is a basic requirement in the classical concept of knowledge that for something to be known it must be true.
\item\label{tgtfeat:NEC} The system supports necessitation for knowledge ($\NEC^\Kn$). \\
{\em Motivation:} $\NEC^\Kn$ formalizes the natural assumption that ones axioms are known and that deductive reasoning from these axioms is a legitimate method for generating knowledge. This principle does not entail that the system proves an internally quantified assertion that everything provable in the system is known (assuming that $\neg \Kn(\gquote{\bot})$ is derivable, such an assertion would contradict G\"odel's second incompleteness theorem). It merely formalizes the principle that whatever theorem is derived in the system is passed on as knowledge to all agents. Even though this may be regarded an idealization, it would be methodologically unacceptable if $\NEC^\Kn$ were inconsistent with other well-motivated principles.
\item\label{tgtfeat:DC} The system proves natural principles of deductive closure for both truth and knowledge.
{\em Motivation:} Consider $\mathsf{I}^\Kn$ from the paradox of Kaplan and Montague, which entails an internally quantified assertion that every theorem of $\PA$ is known. Although it is natural to assume that all those theorems are true, the concept of knowledge in natural language hardly supports the claim that they are all known. Even so, there are reasons for including deductively closed knowledge among the target features: In order to solve the paradoxes, this principle needs to be shown to be consistent. Moreover, even if knowledge is not taken to be deductively closed, the concept obtained as the deductive closure of knowledge is of significant interest, because it serves as a limiting idealization approximating knowledge and provides a useful connection to logic.
\item\label{tgtfeat:semantics} The system has a well-motivated semantics. \\
{\em Motivation:} A semantics shows that the system is consistent, but it also gives insight on the meaning of the axioms, and what conception of truth and knowledge the system axiomatizes.
\item\label{tgtfeat:agents} The system supports reasoning about multiple agents, who share sufficient knowledge about the syntax. \\
{\em Motivation:} Different agents may have different knowledge. But in order to sensibly compare their knowledge, it is reasonable to assume that they all have sufficient knowledge about the syntax, e.g. that they all know ``$\gquote{\phi \wedge \psi}$ is the conjunction of $\gquote{\phi}$ and $\gquote{\psi}$''. 
\item\label{tgtfeat:sym} For any sentence $\phi \in \Lang$, the system proves $\phi$ iff it proves $\Tr(\gquote{\phi})$. \\
{\em Motivation:} In the literature, such a system is called {\em symmetric}. This condition ensures that the formalization of truth is unambiguous, in the following sense: There is a special consideration that comes into play when formalizing truth in a system $S$, namely that for any $\phi \in \Lang$, if $S \vdash \phi$ then $\phi$ is naturally considered true (say {\em externally true}), while if $S \vdash \Tr(\gquote{\phi})$ then $\phi$ is also considered true (say {\em internally true}). Symmetry is given by the following deductive rules: 
\begin{align*}
\vdash \phi \phantom{~~} \Longrightarrow \phantom{~~} \vdash \Tr(\gquote{\phi}) \tag{$\NEC^\Tr$} \\
\vdash \phi \phantom{~~} \Longleftarrow \phantom{~~} \vdash \Tr(\gquote{\phi}) \tag{$\CONEC^\Tr$}
\end{align*}
Note that symmetry may be viewed as a weakening of the following schema of {\em Tarski biconditionals}, where $\phi$ ranges over sentences in $\Lang$:
\begin{align*}
\Tr(\gquote{\phi}) \leftrightarrow \phi \tag{$\textsf{TB}$}
\end{align*}
By the well-known liar paradox, this schema is inconsistent over $\PA$.
\item\label{tgtfeat:UT} The system proves a philosophically satisfying restriction of $\mathsf{TB}$, and explains why these instances hold. \\
{\em Motivation:} Since $\mathsf{TB}$ has many natural instances, it is desirable that the system proves a philosophically satisfying restriction of $\mathsf{TB}$ from well-motivated axioms, thus also explaining why they hold.\footnote{An observation of McGee in \cite{McG92} shows that for every $\Lang_\PA$-sentence $\psi$, there is an instance $\theta$ of $\Tr(\gquote{\phi}) \leftrightarrow \phi$, such that $\Base \vdash \psi \leftrightarrow \theta$. So in light of G\"odel's incompleteness theorem we cannot hope for an axiomatization that proves every true instance of $\Tr(\gquote{\phi}) \leftrightarrow \phi$, just as no axiomatization of arithmetic can prove every true arithmetic sentence.}
\item\label{tgtfeat:Introspec} The system is consistent with supplementary axioms of knowledge, motivated by philosophical applications. \\
{\em Motivation:} Depending on ones philosophical view or purposes, one may wish to adopt such supplementary axioms.
\item\label{tgtfeat:CK} The system defines and supports reasoning with common knowledge, without formalizing any principle directly addressing common knowledge. \\
{\em Motivation:} Such a system has theoretical simplicity, in that it explains common knowledge in terms of knowledge, establishing that there is no need for a separate treatment of common knowledge.
\end{enumerate}
\end{tgtfeat*}

In order to meet the Target features, I have designed the system $\KT$ ({\em Knowledge and Truth}, axiomatized in Figure \ref{Fig:KT}) as an extension of $\FS$ (the {\em Friedman--Sheard system of truth}, axiomatized in Figure \ref{Fig:FS}) and as an extension of $\DCB$ ({\em Deductively Closed Belief}, axiomatized in Figure \ref{Fig:DCK}). $\FS$ was introduced in \cite{FS87}, while the present axiomatization is from \cite{Hal94}.  A self-contained exposition of central results concerning $\FS $ is found in \cite[ch. 14]{Hal14}. The system $\mathsf{UCT}$ ({\em Untyped Compositional Truth}) is obtained from $\FS$ by removing the deductive rules $\NEC^\Tr$ and $\CONEC^\Tr$. $\DCB$ is an innovation of this paper, which is intended as a significant system in its own right providing a basic and robust formalization of deductively closed belief for multiple agents. $\Kn$ is intended as a belief relation or as a knowledge relation according to whether $\DCB$ is considered as a stand-alone system or as a subsystem of $\KT$, respectively.

\begin{figure}

\caption{Axioms of $\DCB$}
\label{Fig:DCK}

\begin{center}
 
\vspace{12pt}
{\bf The system $\DCB$}

\vspace{12pt}
$\DCB$ extends $\Base$ with the following axioms:

\end{center}
\begin{align*}
\exists \alpha &\dt \big( \Ag(\alpha) \big) \tag{$\textsf{Non-triviality}$} \\
\forall \phi \in \udot\Lang^{{0}} &\dt \big(\Kn^1(\phi) \leftrightarrow \forall \alpha \in \Ag \dt \Kn^2(\alpha, \phi) \big) \tag{$\Kn^1$-$\Kn^2$} \\
\forall \alpha \in \Ag \dt \forall \phi, \psi \in \udot \Lang^{{0}} &\dt \Big( \big( \Kn(\alpha, \phi) \wedge \Kn(\alpha, \phi \udot\rightarrow \psi) \big)  \rightarrow \Kn(\alpha, \psi) \Big) \tag{$\textsf{UK}^\Kn$} \\
\forall \alpha \in \Ag \dt \forall \phi \in \udot \Lang^{{0}} &\dt \big( {\Pr}_{\DCB}(\phi)  \rightarrow \Kn(\alpha, \phi) \big) \tag{$\mathsf{R}_\DCB$}
\end{align*}

\end{figure}

\begin{figure}

\caption{Axioms and rules of $\FS $}
\label{Fig:FS}

\begin{center}
 
\vspace{12pt}
{\bf The system $\FS $}

\vspace{12pt}
$\FS $ extends $\Base$ with the following axioms and rules:

\vspace{18pt}
{\bf Axioms of $\FS $}
\end{center}
\begin{align*}
\forall t_1, \cdots, t_n \in \udot \Term_{\udot \Lang_\PA}^{{0}} & \dt \big( \Tr \big( \udot R(t_1, \cdots, t_n) \big)  \leftrightarrow R(t_1^\circ, \cdots, t_n^\circ) \Big), \textnormal{for each $R(\vec{x}) \in \Atom_{\Lang_-}^n$.} \tag{$\UCT^\Atom$}  \\
 \forall \phi \in \udot \Lang^{{0}} & \dt \big( \Tr(\udot \neg \phi)   \leftrightarrow \neg \Tr(\phi) \big) \tag{$\UCT^\neg$} \\
 \forall \phi, \psi \in \udot \Lang^{{0}} & \dt \big( \Tr(\phi \udot\rightarrow \psi)   \leftrightarrow (\Tr(\phi) \rightarrow \Tr(\psi)) \big) \tag{$\UCT^\rightarrow$} \\
 \forall \phi, \psi \in \udot \Lang^{{0}} & \dt \big( \Tr(\phi \udot\wedge \psi)   \leftrightarrow (\Tr(\phi) \wedge \Tr(\psi)) \big) \tag{$\UCT^\wedge$} \\
 \forall \phi, \psi \in \udot \Lang^{{0}} & \dt \big( \Tr(\phi \udot\vee \psi)   \leftrightarrow (\Tr(\phi) \vee \Tr(\psi)) \big) \tag{$\UCT^\vee$} \\
 \forall v \in \udot \Var \dt \forall \phi \in \udot \Lang^{{1}}[v] & \dt \big( \Tr(\udot \forall v \phi)   \leftrightarrow \forall t \in \udot\Term_{\udot \Lang}^0 \big( \Tr(\udot\sbt(\phi, t, v)) \big) \Big) \tag{$\UCT^\forall$} \\
 \forall v \in \udot \Var \dt \forall \phi \in \udot \Lang^{{1}}[v] & \dt \big( \Tr(\udot \exists v \phi)   \leftrightarrow \exists t \in \udot\Term_{\udot \Lang}^0 \big( \Tr(\udot\sbt(\phi, t, v)) \big) \Big)  \tag{$\UCT^\exists$} 
\end{align*}

\vspace{6pt}
\begin{center} 
{\bf Rules of $\FS $}
\end{center}
\begin{align*}
\vdash \phi \phantom{~~} &\Longrightarrow \phantom{~~} \vdash \Tr(\gquote{\phi}) \textrm{, for each $\phi \in \Lang^0$.}  \tag{$\NEC^\Tr$} \\
\vdash \phi \phantom{~~} &\Longleftarrow \phantom{~~} \vdash \Tr(\gquote{\phi}) \textrm{, for each $\phi \in \Lang^0$.}  \tag{$\CONEC^\Tr$}
\end{align*}

\end{figure}

\begin{figure}
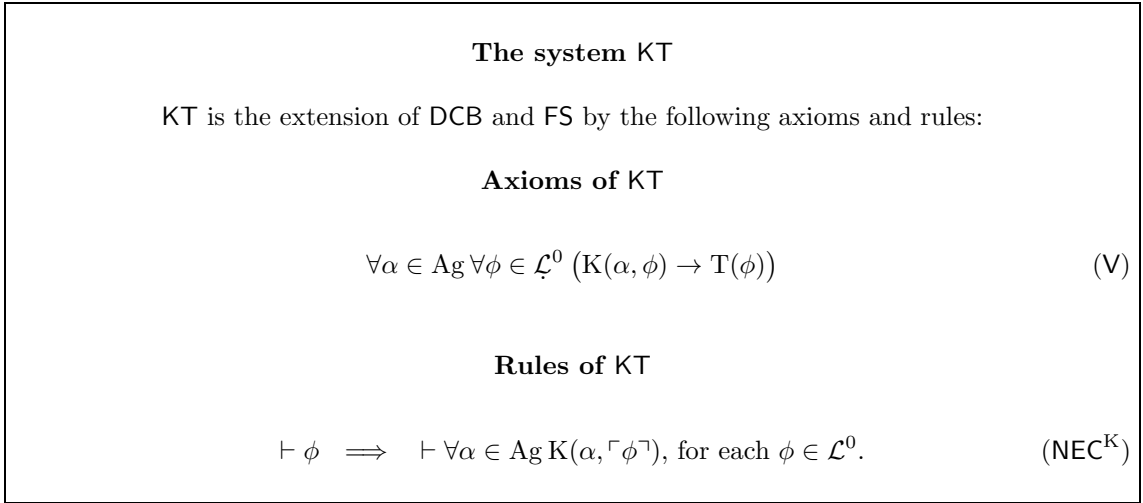


\caption{Axioms and rules of $\KT $}
\label{Fig:KT}

\begin{center}
 \vspace{12pt}
{\bf The system $\KT $}

\vspace{12pt}
$\KT$ is the extension of $\DCB$ and $\FS$ by the following axioms and rules:

\vspace{12pt}
{\bf Axioms of $\KT $}
\end{center}
\begin{align*}
\forall \alpha \in \Ag \dt \forall \phi \in \udot \Lang^{{0}} &\dt \big( \Kn(\alpha, \phi)  \rightarrow \Tr(\phi) \big) \tag{$\mathsf{V}$}
\end{align*}

\vspace{6pt}
\begin{center} 
{\bf Rules of $\KT $}
\end{center}
\begin{align*}
\vdash \phi \phantom{~~} &\Longrightarrow \phantom{~~} \vdash \forall \alpha \in \Ag \dt \Kn(\alpha, \gquote{\phi}) \textrm{, for each $\phi \in \Lang^0$.}  \tag{$\NEC^\Kn$}
\end{align*}

\end{figure}

\begin{figure}
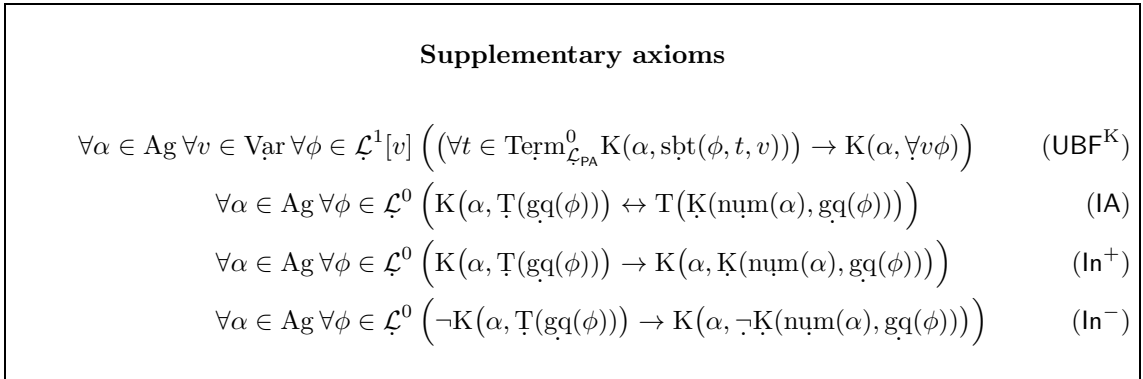


\caption{Supplementary axioms}
\label{Fig:Sup}

\begin{center}
 \vspace{12pt}
{\bf Supplementary axioms}
\end{center}
\begin{align*}
\forall \alpha \in \Ag \dt \forall v \in \udot\Var \dt \forall \phi \in \udot\Lang^1[v] &\dt \Big( \big( \forall t \in \udot\Term_{\udot\Lang_\PA}^0 \Kn(\alpha, \udot\sbt(\phi, t, v)) \big) \rightarrow \Kn(\alpha, \udot\forall v \phi) \Big) \tag{$\mathsf{UBF}^\Kn$} \\
\forall \alpha \in \Ag \dt \forall \phi \in \udot\Lang^0 &\dt \Big( \Kn\big(\alpha, \udot\Tr(\udot\gq(\phi)) \big) \leftrightarrow \Tr\big( \udot\Kn(\udot{\mathrm{num}}(\alpha), \udot\gq(\phi)) \big) \Big) \tag{$\mathsf{IA}$} \\
\forall \alpha \in \Ag \dt \forall \phi \in \udot\Lang^0 &\dt \Big( \Kn\big(\alpha, \udot\Tr(\udot\gq(\phi)) \big) \rightarrow \Kn\big( \alpha, \udot\Kn(\udot{\mathrm{num}}(\alpha), \udot\gq(\phi)) \big) \Big) \tag{$\mathsf{In}^+$} \\
\forall \alpha \in \Ag \dt \forall \phi \in \udot\Lang^0 &\dt \Big( \neg \Kn\big(\alpha, \udot\Tr(\udot\gq(\phi)) \big) \rightarrow \Kn\big( \alpha, \udot\neg \udot\Kn(\udot{\mathrm{num}}(\alpha), \udot\gq(\phi)) \big) \Big) \tag{$\mathsf{In}^-$}
\end{align*}

\end{figure}

I proceed to show how $\KT$ fulfills the respective Target features \ref{tgtfeat:paradox}--\ref{tgtfeat:CK} and to explain the axioms of $\KT$.

\begin{enumerate}
\item This Target feature is subsumed in Target features \ref{tgtfeat:self-ref}--\ref{tgtfeat:semantics}.
\item A common approach to curb the role self-reference plays in the paradoxes, while allowing self-reference in the language, is to introduce a typing-regiment. One may e.g. introduce a hierarchy of symbols $\Tr_i$ and $\Kn_i$, for all $i < \omega$, and formulate the axioms about each $\Tr_i, \Kn_i$ so that they only apply to formulas not including $\Tr_j, \Kn_j$ for all $j \geq i$. However, this is hard to motivate philosophically and it cripples reasoning with useful self-referential formulas such as defined common knowledge predicates. So in order to meet Target feature \ref{tgtfeat:self-ref}, the system is designed to be genuinely untyped. This is achieved in that all relevant axioms quantify universally over $\udot\Lang^0$ (or $\udot\Lang^1$).
\item The axiom $\mathsf{V}$ of $\KT$ is a straight-forward faithful formalization of the principle of veracity, employing formal symbols for both the knowledge relation and the truth predicate.
\item $\KT$ has the rule $\NEC^\Kn$.
\item The axiom $\mathsf{UK}^\Kn$ of $\KT$ asserts that knowledge is closed under modus ponens. It is easily observed that this axiom, together with the rule $\NEC^\Kn$, ensures that for any $\phi, \psi \in \Lang$, if $\KT \vdash \Kn(\gquote{\phi})$ and $\KT \vdash \phi \rightarrow \psi$, then $\KT \vdash \Kn(\gquote{\psi})$. So in this sense, $\KT$ formalizes the intuition that knowledge is closed under any (as externally quantified) reasoning sanctioned by $\KT$. Moreover, the axiom $\mathsf{R}_\DCB$ of {\em Reflection over $\DCB$} is an innovation of this paper that provides additional internally quantified deductive closure. In particular, it is easily seen to entail the schema $\mathsf{I}^\Kn$ assumed in the paradox of Kaplan and Montague. $\mathsf{R}_\DCB$ is a robust way of asserting that $\Kn$ is a deductively closed extension of the basic provability predicate $\Pr_\DCB$ of $\DCB$. This robustness is a consequence of that $\mathsf{R}_\DCB$ is itself an axiom of the system $\DCB$, whose provability predicate it refers to. Therefore, $\mathsf{R}_\DCB$ actually formalizes a self-referential sentence, and its existence follows from the G\"odel fixed-point lemma. Note that it is not possible to reformulate $\KT$ by substituting $\forall \phi \in \udot{\Lang^0} \dt \big( {\Pr}_{\KT }(\phi) \rightarrow \Kn(\phi) \big)$ for $\mathsf{R}_\DCB$, because by $\mathsf{V}$ and the axioms of truth, it would follow that $\neg {\Pr}_{\KT }(\bot)$, contradicting G\"odel's second incompleteness theorem. In conclusion, $\mathsf{UK}^\Kn$, $\mathsf{R}_\DCB$ and $\NEC^\Kn$ ensure that $\KT$ fulfills Target feature $\ref{tgtfeat:DC}$ in a strong sense.
\item $\FS$ meets Target feature \ref{tgtfeat:semantics}, as manifested by the revision semantics, see \cite[ch. 14.1]{Hal14}. This was generalized to modal logic by Stern in \cite{Ste14}. In \S \ref{Sec:Semantics}, I adapt Stern's approach to obtain a revision semantics for multi-agent knowledge. In particular, Theorem \ref{Thm:KT_semantics} establishes that $\KT + \mathsf{UBF} + \mathsf{IA} + \mathsf{In}^+ + \mathsf{In}^-$ can be validated in the revision semantics. The key steps are Lemmata \ref{Lemma:DCB_starting_point} and \ref{Lemma:DCB_stable}, establishing that $\DCB$ (particularly the axiom $\mathsf{R}_\DCB$) can be validated in the revision semantics. 
\item The axioms are formulated for the general setting of a binary knowledge relation $\Kn(\alpha, \phi)$ expressing that $\alpha$ knows $\phi$. Moreover, by $\mathsf{R}_\DCB$, $\KT$ proves that every agent knows the syntactic theorems provable in $\PA$.
\item $\FS$ has the rules $\NEC^\Tr$ and $\CONEC^\Tr$.
\item The subsystem $\UCT$ of $\FS$ is the natural untyped axiomatization of Tarski's compositional semantic definition of truth from \cite{Tar36}. As such, it formalizes philosophically well-motivated assumptions on how truth relates to the atomic formulas and the logical connectives and quantifiers. The remaining rules $\NEC^\Tr$ and $\CONEC^\Tr$ of $\FS$ are also well-motivated, as explained in Target feature \ref{tgtfeat:sym}. By \cite[Corollary 14.24]{Hal14}, $\FS$ proves a philosophically satisfying class of instances of $\mathsf{TB}$. Informally, this is the class of instances of $\Tr(\gquote{\phi}) \leftrightarrow \phi$, where $\phi$ is typable roughly in the sense that it corresponds canonically to a sentence in the language augmenting $\Lang_-$ with a typed hierarchy of truth predicates $\Tr_0, \Tr_1, \cdots$.\footnote{See \cite[Ch. 14]{Hal14} for more details.} Since the axioms of $\FS$ are philosophically well-motivated, this result also explains why these instances of $\mathsf{TB}$ hold.
In the presence of the supplementary axiom $\mathsf{IA}$, this result can be extended to many formulas in which $\Kn$ occurs. A precise general statement and proof of such an extended result would require a considerable technical detour, which is out of scope for the present paper. However, a useful lemma can be conveniently stated and proved:
\begin{lemma}
Let $\phi \in \Lang^0$, and let $S$ be an extension of $\KT + \mathsf{IA}$ (admitting $\NEC^\Kn$). If $S \vdash \Tr(\gquote{\phi}) \leftrightarrow \phi$, then 
$$S \vdash \forall \alpha \in \Ag \dt \Big( \Tr\big( \gquote{\Kn(\udot{\mathrm{num}}(\alpha), \gquote{\phi})} \big) \leftrightarrow \Kn \big( \alpha, \gquote{\phi} \big) \Big).$$
\end{lemma}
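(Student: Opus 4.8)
The plan is to peel off the truth predicate using the supplementary axiom $\mathsf{IA}$, reducing the claim to a statement purely about $\Kn$, and then to settle that statement using $\NEC^\Kn$ together with the modus-ponens closure axiom $\mathsf{UK}^\Kn$ of $\DCB$. Fix the sentence $\phi$ and instantiate $\mathsf{IA}$ at an arbitrary $\alpha \in \Ag$ and at the code $\gquote{\phi}$ in place of its bound variable (legitimate since $\Base \vdash \udot\Lang^0(\gquote{\phi})$). A short computation inside the G\"odel machinery --- using that the representing function symbols $\udot\Tr$, $\udot\Kn$, $\udot{\mathrm{num}}$, $\udot\gq$ compose with the genuine syntactic operations exactly as the representability axioms of $\Base$ dictate --- shows that $\Base$ proves the term identities $\udot\Tr(\udot\gq(\gquote{\phi})) = \gquote{\Tr(\gquote{\phi})}$ and $\udot\Kn(\udot{\mathrm{num}}(\alpha), \udot\gq(\gquote{\phi})) = \gquote{\Kn(\udot{\mathrm{num}}(\alpha), \gquote{\phi})}$. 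Substituting these into the $\mathsf{IA}$ instance gives
\[
S \vdash \forall \alpha \in \Ag \dt \Big( \Kn\big(\alpha, \gquote{\Tr(\gquote{\phi})}\big) \leftrightarrow \Tr\big(\gquote{\Kn(\udot{\mathrm{num}}(\alpha), \gquote{\phi})}\big) \Big),
\]
so it remains only to prove $S \vdash \forall \alpha \in \Ag \dt \big( \Kn(\alpha, \gquote{\Tr(\gquote{\phi})}) \leftrightarrow \Kn(\alpha, \gquote{\phi}) \big)$.

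To obtain this last biconditional, split the hypothesis $S \vdash \Tr(\gquote{\phi}) \leftrightarrow \phi$ into the two implications $S \vdash \Tr(\gquote{\phi}) \rightarrow \phi$ and $S \vdash \phi \rightarrow \Tr(\gquote{\phi})$, both of which are $\Lang^0$-sentences. Applying $\NEC^\Kn$ to each yields $S \vdash \forall \alpha \in \Ag \dt \Kn(\alpha, \gquote{\Tr(\gquote{\phi}) \rightarrow \phi})$ and $S \vdash \forall \alpha \in \Ag \dt \Kn(\alpha, \gquote{\phi \rightarrow \Tr(\gquote{\phi})})$. Since $\Base$ proves $\gquote{A \rightarrow B} = \gquote{A} \udot\rightarrow \gquote{B}$ and proves $\gquote{\Tr(\gquote{\phi})}, \gquote{\phi} \in \udot\Lang^0$, one may now instantiate $\mathsf{UK}^\Kn$ at the relevant codes and combine with the knowledge facts just obtained to get $S \vdash \forall \alpha \in \Ag \dt (\Kn(\alpha, \gquote{\Tr(\gquote{\phi})}) \rightarrow \Kn(\alpha, \gquote{\phi}))$ and, symmetrically, the converse; this is just the internalized version of the deductive-closure observation already recorded for $\mathsf{UK}^\Kn$ and $\NEC^\Kn$. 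Conjoining the two implications gives the desired biconditional, and chaining it with the displayed consequence of $\mathsf{IA}$ completes the proof.

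The argument is essentially routine; the one place demanding care is the first step, namely verifying that the nested-quotation expressions occurring in the statement of the lemma and in $\mathsf{IA}$ and $\mathsf{UK}^\Kn$ --- such as $\gquote{\Tr(\gquote{\phi})}$, $\gquote{\Kn(\udot{\mathrm{num}}(\alpha), \gquote{\phi})}$ and $\gquote{A \rightarrow B}$ --- denote, provably in $\Base$, the same codes as the constructor terms built from $\udot\Tr$, $\udot\Kn$, $\udot{\mathrm{num}}$, $\udot\gq$, $\udot\rightarrow$. I would isolate this as a single sub-claim and discharge it by appeal to the representability assumptions on $\Base$ rather than unwinding every application of $\sbt$. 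A minor caveat worth noting explicitly: $\NEC^\Kn$ may be applied only to outright theorems of $S$, not under open assumptions, but here it is applied precisely to the theorems $\Tr(\gquote{\phi}) \rightarrow \phi$ and $\phi \rightarrow \Tr(\gquote{\phi})$, so this restriction causes no difficulty, and the derivation uses nothing beyond the axioms and rules of $\KT + \mathsf{IA}$.
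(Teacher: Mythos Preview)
Your proof is correct and follows essentially the same approach as the paper: apply $\NEC^\Kn$ to the assumed Tarski biconditional, use $\mathsf{UK}^\Kn$ to obtain $\Kn(\alpha,\gquote{\Tr(\gquote{\phi})}) \leftrightarrow \Kn(\alpha,\gquote{\phi})$, and then invoke $\mathsf{IA}$ to swap $\Kn(\alpha,\gquote{\Tr(\gquote{\phi})})$ for $\Tr(\gquote{\Kn(\udot{\mathrm{num}}(\alpha),\gquote{\phi})})$. The only differences are cosmetic --- you apply $\mathsf{IA}$ first rather than last, and you split the biconditional into its two implications before necessitating rather than necessitating the biconditional and unpacking it under $\mathsf{UK}^\Kn$ --- and your explicit treatment of the term identities in $\Base$ is a welcome bit of care that the paper leaves implicit.
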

\begin{proof}
We work in $\KT + \mathsf{IA}$. Suppose that $\Tr(\gquote{\phi}) \leftrightarrow \phi$. By $\NEC^\Kn$, we have $\forall \alpha \in \Ag \dt \Kn \big( \alpha, \gquote{\Tr(\gquote{\phi}) \leftrightarrow \phi} \big) $. So by $\mathsf{UK}^\Kn$, we have $\forall \alpha \in \Ag \dt \Big( \Kn \big( \alpha, \gquote{\Tr(\gquote{\phi})} \big) \leftrightarrow \Kn \big( \alpha, \gquote{\phi} \big) \Big) $. Now by $\mathsf{IA}$, we obtain $\forall \alpha \in \Ag \dt \Big( \Tr\big( \gquote{\Kn(\udot{\mathrm{num}}(\alpha), \gquote{\phi})} \big) \leftrightarrow \Kn \big( \alpha, \gquote{\phi} \big) \Big)$.
\end{proof}
In particular, if an extension $S$ of $\KT + \mathsf{IA}$ proves the instance of $\mathsf{TB}$ for an $\Lang$-sentence $\phi$ and proves $\Ag(t)$ for an $\Lang$-term $t$, then it proves the instance of $\mathsf{TB}$ for the sentence $\Kn \big( t, \gquote{\phi} \big)$.
\item By Corollary \ref{Cor:KT_consistent}, the supplementary axioms in Figure \ref{Fig:Sup} are jointly consistent with $\KT$. These axioms are relevant to formal epistemology, but I consider them more controversial than the axioms of $\KT$. The axiom $\mathsf{UBF}$ ({\em Untyped Barcan Formula}) asserts that whenever a sentence $\phi(t)$ is known for all $\Lang_\PA$-terms $t$, then $\forall x \dt \phi(x)$ is also known. The axiom $\mathsf{IA}$ ({\em Interaction Axiom}) asserts that something is known to be true iff it is truly known. The axiom $\mathsf{In^+}$ ({\em Positive Introspection}) asserts that whatever is known to be true is known to be known. The axiom $\mathsf{In^-}$ ({\em Negative Introspection}) asserts that whatever is not known to be true is known to be not known. These axioms are also natural for belief. Because of the way that $\KT$ fulfills Target feature \ref{tgtfeat:UT}, $\KT + \mathsf{IA} + \mathsf{In}^+$ also proves the analogous positive introspection principle that if $A$ is known, then $A$ is known to be known, for a large class of sentences $A$ (and analogously for negative introspection). However, by Proposition \ref{Prop:U4_incon}, $\KT$ is inconsistent with the instance of this assertion where $A$ is the sentence $\delta$ from Montague's paradox.

Consider the application obtained by adding the following axioms to $\KT$ (but restricting $\NEC^\Kn$ to proofs in $\KT$), where $s$ and $a$ are constants: $s = \gquote{\underline{0}=\underline{0}}$, $\Ag(a)$, $\neg \Kn(a, \gquote{s = \gquote{\underline{0}=\underline{0}}})$ and $\neg \Kn(a, \gquote{\Tr(s)})$. (This formalizes the possible situation that an agent $a$ does not know ``$s$ is `$0=0$'{}'', and consequently does not know ``$s$ is true''.) Working in this system, note that $\Kn(a, s)$, by $\NEC^\Kn$ (and substitution of identicals), and that $\Tr\big(\gquote{\Kn(a, s)}\big)$, by $\NEC^\Tr$. Moreover, we obtain that $\neg \Kn\big(a, \gquote{\Kn(a, s)}\big)$, by $\mathsf{V}$, $\NEC^\Kn$, $\mathsf{UK}^\Kn$ and $\neg \Kn(a, \gquote{\Tr(s)})$. This example provides a counterexample both to $\mathsf{IA}$ and to the principle that whatever is known is known to be known. Still, $\mathsf{IA}$ is quite a reasonable axiom, under the assumption that all agents know each theorem of the system concerning syntax (in the counterexample, the agent $a$ lacks knowledge about what sentence $s$ refers to); otherwise it needs to be restricted in some way. I have formulated $\mathsf{In}^+$ and $\mathsf{In}^-$ in such a way that they are compatible with this example. Note that the alternative formalization $\forall \alpha \in \Ag \dt \forall \phi \in \udot\Lang^0 \dt \Big( \Tr\big(\udot\Kn(\udot{\mathrm{num}}(\alpha), \udot\gq(\phi)) \big) \rightarrow \Kn\big( \alpha, \udot\Kn(\udot{\mathrm{num}}(\alpha), \udot\gq(\phi)) \big) \Big)$ of positive introspection is not compatible with it.

\item In \S \ref{Sec:DCB_CK_well-behaved}, I show how $\DCB$ meets Target feature \ref{tgtfeat:CK}. The robust self-referential formulation of $\mathsf{R}_\DCB$ plays a key role in these results, as it enables a method of proof based on L\"ob's theorem. My main results on $\DCB$ are that $\DCB$ proves that the defined common belief/knowledge predicates are unique in a certain sense and that they have analogous properties of deductive closure to those of $\Kn$. This concludes the argument that $\KT$ has all the Target features \ref{tgtfeat:paradox}--\ref{tgtfeat:CK}.
\end{enumerate}

The axiom $\Kn^1$-$\Kn^2$ of $\DCB$ ensures that $\Kn^1(y)$ expresses that all agents believe/know $y$. Lastly, note that since $\Pr_\DCB$ represents the provability predicate of $\DCB$, the axiom $\mathsf{R}_\DCB$ entails that $\DCB$ admits the deductive rule $\NEC^\Kn$.\footnote{For any sentence $\phi \in \Lang$, $\DCB \vdash \phi \Longrightarrow \DCB \vdash \Pr_\DCB(\gquote{\phi}) \Longrightarrow \DCB \vdash \forall \alpha \in \Ag \dt \big( \Kn(\alpha, \gquote{\phi}) \big)$.}

\section{$\DCB$ facilitates reasoning about common belief/knowledge}\label{Sec:DCB_CK_well-behaved}

In this section, I show how $\DCB$ self-referentially defines common belief/knowledge and supports expedient reasoning about it. The proofs utilize a technique based on L\"ob's theorem.

Let $A(x)$ be an $\Lang$-formula with only $x$ free. I write $\CK_A(y)$ (read {\em Common Belief/Knowledge for the agents satisfying $A$}) for an $\Lang$-formula with only $y$ free, such that
\begin{equation*}
\DCB \vdash \forall u \in \udot\Lang^0 \dt \Big[ \CK_A(u) \leftrightarrow \forall x \in \Ag \dt \Big(A(x) \rightarrow \big[ \Kn(x, u) \wedge \Kn \big( x, \udot\sbt(\gquote{\CK_A}, \udot{\textrm{gq}}(u), \gquote{y}) \big) \big] \Big) \Big]. \tag{$\textsf{CKE}$}
\end{equation*}
I call this the {\em Common Belief/Knowledge Equivalence}; it follows from the G\"odel--Carnap fixed-point lemma. It is convenient to abbreviate $\textsf{CKE}$ as follows: 
\begin{equation*}
\DCB \vdash \forall u \in \udot{\Lang^0} \dt \big[ \CK_A(u) \leftrightarrow \Psi_A(u, \gquote{\CK_A}) \big],
\end{equation*}
where $\Psi_A(u, v)$ is the formula
\begin{equation*}
\forall x \in \Ag \dt \Big(A(x) \rightarrow \big[ \Kn(x, u) \wedge \Kn \big( x, \udot\sbt(v, \udot{\textrm{gq}}(u), \gquote{y}) \big) \big] \Big).
\end{equation*}

The results in this section are related to results on self-reference due to Smory\'nski \cite[Ch. 4]{Smo85}. They are proved by the same technique, using L\"ob's theorem from \cite{Lob55}: 

\begin{thm}[L\"ob]
Let $S$ be a (recursively enumerable) system extending $\PA$, and let $\phi$ be a sentence in its language. If $S \vdash \Pr_S(\gquote{\phi}) \rightarrow \phi$, then $S \vdash \phi$.
\end{thm}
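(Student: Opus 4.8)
The plan is to run the classical derivation of L\"ob's theorem from the Hilbert--Bernays--L\"ob derivability conditions, which are available because $S$ is recursively enumerable and extends $\PA$ (indeed extends $\Base$, so it has the coding apparatus and the G\"odel--Carnap fixed-point lemma). Write $\Box\chi$ for $\Pr_S(\gquote{\chi})$. The three facts I will invoke are: (D1) if $S \vdash \chi$ then $S \vdash \Box \chi$; (D2) $S \vdash \Box(\chi \rightarrow \eta) \rightarrow (\Box\chi \rightarrow \Box\eta)$; and (D3) $S \vdash \Box\chi \rightarrow \Box\Box\chi$. These hold for the standard arithmetized provability predicate of any r.e. extension of $\PA$, and I would cite them rather than reprove them.

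First I would apply the fixed-point lemma to the formula $\Pr_S(v) \rightarrow \phi$ (in the single free variable $v$, treating $\phi$ as a parameter-free sentence) to obtain a sentence $\psi$ with $S \vdash \psi \leftrightarrow (\Box\psi \rightarrow \phi)$. From the left-to-right direction, $S \vdash \psi \rightarrow (\Box\psi \rightarrow \phi)$, so by (D1) and (D2), $S \vdash \Box\psi \rightarrow \Box(\Box\psi \rightarrow \phi)$, and then applying (D2) once more, $S \vdash \Box\psi \rightarrow (\Box\Box\psi \rightarrow \Box\phi)$. Combining this with (D3) for $\chi = \psi$ gives $S \vdash \Box\psi \rightarrow \Box\phi$. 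Now feed in the hypothesis $S \vdash \Box\phi \rightarrow \phi$ to conclude $S \vdash \Box\psi \rightarrow \phi$. By the right-to-left direction of the fixed-point equivalence this yields $S \vdash \psi$; then (D1) gives $S \vdash \Box\psi$, and one final modus ponens with $S \vdash \Box\psi \rightarrow \phi$ delivers $S \vdash \phi$.

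The only genuine subtlety — and the step I would flag as the main obstacle if one insisted on full self-containment — is the verification of (D2) and (D3), i.e.\ that $\Pr_S$ provably (inside $S$) respects modus ponens and satisfies provable $\Sigma_1$-completeness for its own statements. Since $S$ extends $\PA$ and the $\Pr_S$ predicate is built from a $\Sigma_1$ definition of the proof relation, these are standard; in the present setting they are subsumed by the assumptions placed on $\Base$ and on the G\"odel machinery in \S\ref{Sec:Prel}, so in the write-up I would simply recall the derivability conditions and proceed as above, keeping the argument to a few lines.
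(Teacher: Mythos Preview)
Your argument is the standard L\"ob derivation via the Hilbert--Bernays--L\"ob derivability conditions and a diagonal sentence $\psi \leftrightarrow (\Box\psi \rightarrow \phi)$, and it is correct as written. Note, however, that the paper does not supply its own proof of this theorem: it is simply quoted from \cite{Lob55} as a tool to be applied in the subsequent results, so there is no ``paper's proof'' to compare against---your write-up is exactly the kind of short classical argument one would give if a self-contained proof were required.
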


\begin{thm}\label{Thm:Implied_CK}
Let $A(x), \theta(y), \theta'(y)$ be $\Lang$-formulas such that
\begin{align*}
\DCB  &\vdash \forall u \in \udot{\Lang^0} \dt \big( \theta(u) \rightarrow \Psi_A(u, \gquote{\theta}) \big), \\
\DCB  &\vdash \forall u \in \udot{\Lang^0} \dt \big( \theta'(u) \leftarrow \Psi_A(u, \gquote{\theta'}) \big).
\end{align*}
Then $\DCB  \vdash \forall u \in \udot{\Lang^0} \dt \big( \theta(u) \rightarrow \theta'(u) \big).$
\end{thm}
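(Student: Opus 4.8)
The plan is to derive the statement from L\"ob's theorem. Write $\sigma$ for the target sentence $\forall u \in \udot{\Lang^0} \dt \big( \theta(u) \rightarrow \theta'(u) \big)$, which is an $\Lang$-sentence. Since $\DCB$ is a recursively enumerable extension of $\PA$ and $\Pr_\DCB$ its provability predicate, L\"ob's theorem reduces the goal to
\[ \DCB \vdash \Pr_\DCB(\gquote{\sigma}) \rightarrow \sigma . \]
So I would argue inside $\DCB$ under the extra hypothesis $\Pr_\DCB(\gquote{\sigma})$: fix $u \in \udot{\Lang^0}$ with $\theta(u)$ and aim at $\theta'(u)$. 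By the second hypothesis on $\theta'$ it suffices to derive $\Psi_A(u, \gquote{\theta'})$, and by the first hypothesis on $\theta$ we already have $\Psi_A(u, \gquote{\theta})$. Hence everything comes down to a ``monotonicity'' implication: under the assumption $\Pr_\DCB(\gquote{\sigma})$, derive $\Psi_A(u, \gquote{\theta}) \rightarrow \Psi_A(u, \gquote{\theta'})$.

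For the monotonicity step, abbreviate $\tau := \udot\sbt(\gquote{\theta}, \udot\gq(u), \gquote{y})$ and $\tau' := \udot\sbt(\gquote{\theta'}, \udot\gq(u), \gquote{y})$, the codes of the $u$-instances $\theta(\underline u)$ and $\theta'(\underline u)$; both provably lie in $\udot{\Lang^0}$, and $\tau \udot\rightarrow \tau'$ provably codes $\theta(\underline u) \rightarrow \theta'(\underline u)$. Fix $x \in \Ag$ with $A(x)$; unwinding $\Psi_A(u, \gquote{\theta})$ supplies $\Kn(x, u)$ and $\Kn(x, \tau)$. Now from $u \in \udot{\Lang^0}$ and the assumed $\Pr_\DCB(\gquote{\sigma})$, routine formalized reasoning --- the Hilbert--Bernays--L\"ob derivability conditions for $\Pr_\DCB$ together with provable $\Sigma_1$-completeness of the syntactic predicates --- yields $\Pr_\DCB(\tau \udot\rightarrow \tau')$. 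Applying the axiom $\mathsf{R}_\DCB$ with $\alpha := x$ then gives $\Kn(x, \tau \udot\rightarrow \tau')$, and one application of $\mathsf{UK}^\Kn$ turns $\Kn(x, \tau)$ into $\Kn(x, \tau')$. As $x$ was an arbitrary agent satisfying $A$ and $\Kn(x, u)$ was retained, this is $\Psi_A(u, \gquote{\theta'})$. Chaining this with the two hypotheses gives $\theta'(u)$, hence $\sigma$ holds under $\Pr_\DCB(\gquote{\sigma})$; L\"ob's theorem then delivers $\DCB \vdash \sigma$.

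The only non-routine ingredient is this bootstrap, and it is exactly where $\mathsf{R}_\DCB$ and $\mathsf{UK}^\Kn$ pull their weight: L\"ob's theorem licenses helping ourselves to $\Pr_\DCB(\gquote{\sigma})$, the axiom $\mathsf{R}_\DCB$ converts that assumed provability into each agent's knowledge of the relevant instances, and $\mathsf{UK}^\Kn$ propagates knowledge across the implication from $\theta$ to $\theta'$. That this goes through hinges on the self-referential formulation of $\mathsf{R}_\DCB$ --- it speaks about provability in the very system of which it is an axiom --- so the standard Smory\'nski-style uniqueness technique for fixed points applies with ``$\Pr_\DCB$ followed by $\mathsf{R}_\DCB$'' in the role of the modal box. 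Everything else (that $\udot\sbt$ and $\udot\gq$ commute with the connectives and with coding, that $\tau,\tau',\tau \udot\rightarrow \tau'$ provably lie in $\udot{\Lang^0}$, and the derivability conditions for $\Pr_\DCB$) is routine given the standing assumptions on $\Base$ and the representability of syntax.
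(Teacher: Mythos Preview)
Your proposal is correct and follows essentially the same approach as the paper: reduce via L\"ob's theorem to proving $\Pr_\DCB(\gquote{\sigma}) \rightarrow \sigma$ in $\DCB$, establish the monotonicity step $\Psi_A(u, \gquote{\theta}) \rightarrow \Psi_A(u, \gquote{\theta'})$ under the hypothesis $\Pr_\DCB(\gquote{\sigma})$ by instantiating inside $\Pr_\DCB$, applying $\mathsf{R}_\DCB$ to pass to $\Kn$, and then $\mathsf{UK}^\Kn$; finally chain with the two assumptions on $\theta$ and $\theta'$. The paper phrases the instantiation step as ``universal instantiation internal to $\Pr_\DCB$'' where you invoke derivability conditions and provable $\Sigma_1$-completeness, but these amount to the same thing.
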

\begin{proof}
We start by showing that $\DCB $ proves
\begin{align*}
& {\Pr}_{\DCB }\Big( \gquote{\forall y \in \udot{\Lang^0} \dt \big( \theta(y) \rightarrow \theta'(y) \big)} \Big) \rightarrow \\
& \forall u \in \udot{\Lang^0} \dt \big( \Psi_A(u, \gquote{\theta}) \rightarrow \Psi_A(u, \gquote{\theta'}) \big).  \tag{$\dagger$}
\end{align*}
By ${\Pr}_{\DCB }\Big( \gquote{\forall y \in \udot{\Lang^0} \dt \big( \theta(y) \rightarrow \theta'(y) \big)} \Big)$, and universal instantiation internal to ${\Pr}_{\DCB }$, 
\begin{align*}
& \forall u \in \udot{\Lang^0} \dt \Big( {\Pr}_{\DCB } \big( \udot\sbt(\gquote{ \theta(y) \rightarrow \theta'(y)}, \udot{\textrm{gq}}(u), \gquote{y}) \big) \Big).
\end{align*}
Distributing the substitution, and applying $\mathsf{R}_\DCB $ to this, we obtain
\begin{align*}
& \forall u \in \udot{\Lang^0} \dt \forall x \in \Ag \dt \Big( \Kn \big(x, \udot\sbt(\gquote{\theta}, \udot{\textrm{gq}}(u), \gquote{y}) \udot\rightarrow \udot\sbt(\gquote{\theta'}, \udot{\textrm{gq}}(u), \gquote{y}) \big) \Big).
\end{align*}
Now by $\textsf{UK}^\Kn$,
\begin{align*}
& \forall u \in \udot{\Lang^0} \dt \forall x \in \Ag \dt \Big( \Kn \big(x, \udot\sbt(\gquote{\theta}, \udot{\textrm{gq}}(u), \gquote{y}] \big) \rightarrow \Kn \big(x, \udot\sbt(\gquote{\theta'}, \udot{\textrm{gq}}(u), \gquote{y}) \big) \Big).
\end{align*}
It follows from this, and the definition of $\Psi_A$, that
\begin{align*}
& \forall u \in \udot{\Lang^0} \dt \big( \Psi_A(u, \gquote{\theta}) \rightarrow \Psi_A(u, \gquote{\theta'}) \big),
\end{align*}
establishing ($\dagger$).

By ($\dagger$) and the assumption of this theorem,
\begin{align*}
\DCB  \vdash {} & {\Pr}_{\DCB }\Big( \gquote{\forall y \in \udot{\Lang^0} \dt \big( \theta(y) \rightarrow \theta'(y) \big)} \Big) \rightarrow \\
& \forall y \in \udot{\Lang^0} \dt \big( \theta(y) \rightarrow \theta'(y) \big). 
\end{align*}
So by L\"ob's theorem,
\begin{align*}
\DCB  \vdash \forall y \in \udot{\Lang^0} \dt \big( \theta(y) \rightarrow \theta'(y) \big). 
\end{align*}
as desired.
\end{proof}

From the above theorem we obtain uniqueness of the defined common belief/knowledge predicate, with respect to provability in $\DCB $:

\begin{cor}[Uniqueness of defined common knowledge]\label{Cor:Unique_CK}
Let $\CK'_A(y)$ be an $\Lang$-formula such that
\begin{align*}
\DCB  \vdash \forall u \in \udot{\Lang^0} \dt \big[ \CK'_A(u) \leftrightarrow \Psi_A(u, \gquote{\CK'_A}) \big].
\end{align*}
Then $\DCB  \vdash \forall y \in \udot{\Lang^0} \dt \big[ \CK_A(y) \leftrightarrow \CK'_A(y) \big].$
\end{cor}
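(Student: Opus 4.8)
The plan is to derive this as an immediate double application of Theorem~\ref{Thm:Implied_CK}, exploiting the fact that the biconditional defining each of $\CK_A$ and $\CK'_A$ packages together both of the directional hypotheses that the theorem requires.

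First I would apply Theorem~\ref{Thm:Implied_CK} with $\theta := \CK_A$ and $\theta' := \CK'_A$. The first hypothesis, $\DCB \vdash \forall u \in \udot{\Lang^0} \dt \big( \CK_A(u) \rightarrow \Psi_A(u, \gquote{\CK_A}) \big)$, is just the left-to-right direction of $\textsf{CKE}$ (recalled in the abbreviated form $\DCB \vdash \forall u \in \udot{\Lang^0} \dt [ \CK_A(u) \leftrightarrow \Psi_A(u, \gquote{\CK_A}) ]$). The second hypothesis, $\DCB \vdash \forall u \in \udot{\Lang^0} \dt \big( \CK'_A(u) \leftarrow \Psi_A(u, \gquote{\CK'_A}) \big)$, is the right-to-left direction of the defining biconditional for $\CK'_A$ assumed in the statement. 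Hence Theorem~\ref{Thm:Implied_CK} yields $\DCB \vdash \forall u \in \udot{\Lang^0} \dt \big( \CK_A(u) \rightarrow \CK'_A(u) \big)$.

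Next I would run the symmetric argument, applying Theorem~\ref{Thm:Implied_CK} with the roles swapped, $\theta := \CK'_A$ and $\theta' := \CK_A$: the first hypothesis is now the left-to-right direction of the biconditional for $\CK'_A$, and the second hypothesis is the right-to-left direction of $\textsf{CKE}$ for $\CK_A$. This gives $\DCB \vdash \forall u \in \udot{\Lang^0} \dt \big( \CK'_A(u) \rightarrow \CK_A(u) \big)$. Conjoining the two implications under the bounded quantifier $\forall y \in \udot{\Lang^0}$ (renaming $u$ to $y$) produces $\DCB \vdash \forall y \in \udot{\Lang^0} \dt \big[ \CK_A(y) \leftrightarrow \CK'_A(y) \big]$, as desired.

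There is essentially no obstacle here beyond bookkeeping: the only point requiring care is correctly matching the $\rightarrow$/$\leftarrow$ halves of each biconditional to the asymmetric hypotheses of Theorem~\ref{Thm:Implied_CK} in each of the two applications. All of the genuine work — in particular the use of $\mathsf{R}_\DCB$, $\textsf{UK}^\Kn$, and L\"ob's theorem — has already been carried out inside the proof of Theorem~\ref{Thm:Implied_CK}.
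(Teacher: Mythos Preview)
Your proposal is correct and matches the paper's own proof essentially verbatim: the paper simply notes the two biconditionals for $\CK_A$ and $\CK'_A$ and says the result follows by applying Theorem~\ref{Thm:Implied_CK} in both directions. Your write-up just makes explicit which half of each biconditional feeds which hypothesis in each application.
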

\begin{proof}
Since
\begin{align*}
\DCB  &\vdash \forall u \in \udot{\Lang^0} \dt \big[ \CK_A(u) \leftrightarrow \Psi_A(u, \gquote{\CK_A}) \big], \\
\DCB  &\vdash \forall u \in \udot{\Lang^0} \dt \big[ \CK'_A(u) \leftrightarrow \Psi_A(u, \gquote{\CK'_A}) \big],
\end{align*}
the result follows by applying Theorem \ref{Thm:Implied_CK} in both directions.
\end{proof}
The philosophical upshot is that if we require generally of defined common knowledge predicates that our theory of syntax $\Base$ (or even $\DCB$) is sufficient to witness that they are adequate representations of common knowledge, then they are unique up to equivalence in $\DCB$ (and consequently in $\KT$ as well).

Here is a useful application of the above corollary:

\begin{lemma}\label{Lem:Conj_CK}
Let $A(x) \in \Lang$ and let $\phi, \psi \in  \Lang^0$.
\begin{align*}
\DCB  \vdash \CK_A(\gquote{\phi}) \wedge \CK_A(\gquote{\psi}) \leftrightarrow \CK_A(\gquote{\phi \wedge \psi}).
\end{align*}
\end{lemma}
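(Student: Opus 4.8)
The plan is to apply the uniqueness Corollary \ref{Cor:Unique_CK}. Simply unfolding $\mathsf{CKE}$ at $\gquote{\phi\wedge\psi}$, $\gquote{\phi}$ and $\gquote{\psi}$ does not work, because $\CK_A(\gquote{\phi\wedge\psi})$ involves $\Kn(x,\gquote{\CK_A(\gquote{\phi\wedge\psi})})$ while $\CK_A(\gquote{\phi})$ involves $\Kn(x,\gquote{\CK_A(\gquote{\phi})})$, and these carry no a priori logical relation — this is exactly the self-referential circularity that the L\"ob-based machinery is built to break. So instead I would introduce, via the G\"odel--Carnap fixed-point lemma, an auxiliary $\Lang$-formula $\CK'_A(y)$ which has conjunction-distribution built into its definition: $\CK'_A(y)$ is to be a fixed point such that $\Base$ (hence $\DCB$) proves, for all $u\in\udot{\Lang^0}$, that $\CK'_A(u)$ is equivalent to ``$u$ is a conjunction $\phi_1\udot\wedge\phi_2$ with $\phi_1,\phi_2\in\udot{\Lang^0}$ and $\CK_A(\phi_1)\wedge\CK_A(\phi_2)$'', and otherwise equivalent to $\Psi_A(u,\gquote{\CK'_A})$. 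This is a legitimate fixed point: the numeral $\gquote{\CK_A}$ enters the template as a constant, and genuine self-reference occurs only through the $\gquote{\CK'_A}$ inside $\Psi_A$ in the second branch. Since $\Base$ proves $\gquote{\phi\wedge\psi}=\gquote{\phi}\udot\wedge\gquote{\psi}$ and proves uniqueness of conjunctive decomposition (injectivity of $\udot\wedge$ on $\udot{\Lang^0}\times\udot{\Lang^0}$), $\Base$ proves the schema $\CK'_A(a\udot\wedge b)\leftrightarrow\CK_A(a)\wedge\CK_A(b)$ for $a,b\in\udot{\Lang^0}$; in particular $\DCB\vdash\CK'_A(\gquote{\phi\wedge\psi})\leftrightarrow\CK_A(\gquote{\phi})\wedge\CK_A(\gquote{\psi})$.

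It then remains to check the hypothesis of Corollary \ref{Cor:Unique_CK}, namely $\DCB\vdash\forall u\in\udot{\Lang^0}\,[\CK'_A(u)\leftrightarrow\Psi_A(u,\gquote{\CK'_A})]$. For $u$ not a conjunction this holds by definition of $\CK'_A$, so I fix $u=\phi_1\udot\wedge\phi_2$ with $\phi_1,\phi_2\in\udot{\Lang^0}$ and must show $\CK_A(\phi_1)\wedge\CK_A(\phi_2)\leftrightarrow\Psi_A(\phi_1\udot\wedge\phi_2,\gquote{\CK'_A})$; unwinding $\Psi_A$, the right side says that every agent $x$ with $A(x)$ has $\Kn(x,\phi_1\udot\wedge\phi_2)$ and $\Kn\big(x,\udot\sbt(\gquote{\CK'_A},\udot{\textrm{gq}}(\phi_1\udot\wedge\phi_2),\gquote{y})\big)$. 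For the forward direction, assuming $\CK_A(\phi_1)\wedge\CK_A(\phi_2)$ and unfolding $\mathsf{CKE}$, each such $x$ knows $\phi_1$, knows $\phi_2$, and knows the two relevant instances of $\gquote{\CK_A}$. Now $\mathsf{R}_\DCB$ together with $\mathsf{UK}^\Kn$ makes $\Kn(x,\cdot)$ closed under the propositional reasoning of $\DCB$: since $\DCB$ proves ${\Pr}_\DCB$ of each of its theorems, one can, by internal universal instantiation and distributing the substitution exactly as in the proof of Theorem \ref{Thm:Implied_CK}, internalize conjunction introduction and the schema $\CK'_A(a\udot\wedge b)\leftrightarrow\CK_A(a)\wedge\CK_A(b)$ as knowledge of every agent $x$ satisfying $A$; chaining these with $\mathsf{UK}^\Kn$ yields $\Kn(x,\phi_1\udot\wedge\phi_2)$ and $\Kn\big(x,\udot\sbt(\gquote{\CK'_A},\udot{\textrm{gq}}(\phi_1\udot\wedge\phi_2),\gquote{y})\big)$, i.e. $\Psi_A(\phi_1\udot\wedge\phi_2,\gquote{\CK'_A})$. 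The backward direction is symmetric, using conjunction elimination and the converse of the schema in place of introduction. With the fixed-point hypothesis verified, Corollary \ref{Cor:Unique_CK} gives $\DCB\vdash\forall u\in\udot{\Lang^0}\,[\CK_A(u)\leftrightarrow\CK'_A(u)]$; instantiating at $u=\gquote{\phi\wedge\psi}$ and combining with the equivalence from the first paragraph yields $\DCB\vdash\CK_A(\gquote{\phi})\wedge\CK_A(\gquote{\psi})\leftrightarrow\CK_A(\gquote{\phi\wedge\psi})$.

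The main obstacle is the internalization step in the conjunction case: moving the $\DCB$-provable biconditionals $\CK'_A(a\udot\wedge b)\leftrightarrow\CK_A(a)\wedge\CK_A(b)$ and the propositional conjunction laws ``inside $\Kn(x,\cdot)$'' for every agent $x$ with $A(x)$. This is precisely where the robust self-referential formulation of $\mathsf{R}_\DCB$ pays off — being itself an axiom of $\DCB$, it lets any $\DCB$-theorem be internalized (through ${\Pr}_\DCB$ and internal instantiation) as knowledge of each agent, after which $\mathsf{UK}^\Kn$ propagates it. Everything else is routine G\"odel-coding bookkeeping about $\udot\wedge$, $\udot\sbt$ and $\udot{\textrm{gq}}$.
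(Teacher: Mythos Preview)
Your approach is correct but more elaborate than the paper's. The paper takes $\theta$ to be the \emph{sentence} $\CK_A(\gquote{\phi})\wedge\CK_A(\gquote{\psi})$, with no free variable $y$; since substitution into $\theta$ is then vacuous, $\Psi_A(\gquote{\phi\wedge\psi},\gquote{\theta})$ simply says that every agent $x$ with $A(x)$ knows $\gquote{\phi\wedge\psi}$ and knows $\gquote{\theta}$. Unfolding $\mathsf{CKE}$ at $\gquote{\phi}$ and $\gquote{\psi}$ and merging/splitting conjunctions inside $\Kn$ via $\mathsf{UK}^\Kn$ and $\NEC^\Kn$ yields $\theta\leftrightarrow\Psi_A(\gquote{\phi\wedge\psi},\gquote{\theta})$ in three lines. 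The invocation of Corollary~\ref{Cor:Unique_CK} is really a pointwise instance of the same L\"ob argument: from ${\Pr}_\DCB\big(\gquote{\CK_A(\gquote{\phi\wedge\psi})\leftrightarrow\theta}\big)$ one gets, via $\mathsf{R}_\DCB$ and $\mathsf{UK}^\Kn$, that $\Kn(x,\gquote{\CK_A(\gquote{\phi\wedge\psi})})\leftrightarrow\Kn(x,\gquote{\theta})$, hence $\Psi_A(\gquote{\phi\wedge\psi},\gquote{\CK_A})\leftrightarrow\Psi_A(\gquote{\phi\wedge\psi},\gquote{\theta})$, and combining with $\mathsf{CKE}$ and the displayed equivalence gives the L\"ob hypothesis. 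Your route---introducing a fresh G\"odel fixed point $\CK'_A$ that case-splits on whether $u$ is a conjunction and then verifying the full universal hypothesis of Corollary~\ref{Cor:Unique_CK}---reaches the same conclusion, and even buys you the internally quantified form $\DCB\vdash\forall a,b\in\udot\Lang^0\,[\CK_A(a\udot\wedge b)\leftrightarrow\CK_A(a)\wedge\CK_A(b)]$ rather than just the schema; but the second fixed point and the non-conjunction branch are overhead that the constant-sentence trick avoids entirely.
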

\begin{proof}
Let $\theta$ be the formula $\CK_A(\gquote{\phi}) \wedge \CK_A(\gquote{\psi})$. We work in $\DCB $. By Corollary \ref{Cor:Unique_CK}, it suffices to show that 
\begin{align*}
\theta \leftrightarrow \Psi_A(\gquote{\phi \wedge \psi}, \gquote{\theta}).
\end{align*}
This is shown by the following equivalences:
\begin{align*}
& \CK_A(\gquote{\phi}) \wedge \CK_A(\gquote{\psi}) \\
\iff {} & \forall x \dt \Big(A(x) \rightarrow \big[ \Kn(x, \gquote{\phi}) \wedge \Kn(x, \gquote{\psi}) \wedge \Kn \big( x, \gquote{\CK_A(\gquote{\phi})} \big) \wedge \Kn \big( x, \gquote{\CK_A(\gquote{\psi})} \big) \big] \Big) \\
\iff {} & \forall x \dt \Big(A(x) \rightarrow \big[ \Kn(x, \gquote{\phi \wedge \psi}) \wedge \Kn \big( x, \gquote{\CK_A(\gquote{\phi}) \wedge \CK_A(\gquote{\psi})} \big) \big] \Big).
\end{align*}
The last equivalence follows from $\textsf{UK}^\Kn$ and $\NEC^\Kn$.
\end{proof}

Theorem \ref{Thm:Implied_CK} can be generalized as follows:

\begin{thm}\label{Thm:General_CK}
Suppose that $A(x), B(x), \theta(y), \theta'(y)$ are $\Lang$-formulas and $f$ is a function definable in $\DCB $, such that $\DCB $ proves
\begin{align*}
 & \forall u \in \udot{\Lang^0} \dt \big( \theta(u) \rightarrow \Psi_A(u, \gquote{\theta}) \big), \\
 & \forall u \in \udot{\Lang^0} \dt \big( \theta'(u) \leftarrow \Psi_B(u, \gquote{\theta'}) \big), \\
 & \forall x \dt \big( B(x) \rightarrow A(x) \big), \\
 & \forall u \in \udot{\Lang^0} \dt \forall x \in B \dt \big( \Kn(x, u) \rightarrow \Kn(x, f(u)) \big). 
\end{align*}
Then $\DCB \vdash \forall u \in \udot{\Lang^0} \dt  \big( \theta(u) \rightarrow \theta'(f(u)) \big).$
\end{thm}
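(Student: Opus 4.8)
The plan is to prove Theorem~\ref{Thm:General_CK} by adapting the argument for Theorem~\ref{Thm:Implied_CK}, again applying L\"ob's theorem to the target sentence $\forall y \in \udot{\Lang^0} \dt \big( \theta(y) \rightarrow \theta'(f(y)) \big)$. Thus it suffices to prove, reasoning in $\DCB$, the conditional
\[
{\Pr}_{\DCB}\Big( \gquote{\forall y \in \udot{\Lang^0} \dt \big( \theta(y) \rightarrow \theta'(f(y)) \big)} \Big) \rightarrow \forall y \in \udot{\Lang^0} \dt \big( \theta(y) \rightarrow \theta'(f(y)) \big),
\]
after which L\"ob yields $\DCB \vdash \forall y \in \udot{\Lang^0} \dt \big( \theta(y) \rightarrow \theta'(f(y)) \big)$, which is the conclusion. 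Everything below is carried out in $\DCB$.

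First I would establish the appropriate generalization of step $(\dagger)$ from the proof of Theorem~\ref{Thm:Implied_CK}, namely
\[
{\Pr}_{\DCB}\Big( \gquote{\forall y \in \udot{\Lang^0} \dt \big( \theta(y) \rightarrow \theta'(f(y)) \big)} \Big) \rightarrow \forall u \in \udot{\Lang^0} \dt \big( \Psi_A(u, \gquote{\theta}) \rightarrow \Psi_B(f(u), \gquote{\theta'}) \big).
\]
Working under the antecedent, I would apply universal instantiation internal to ${\Pr}_{\DCB}$ and distribute the substitution, obtaining for each $u \in \udot{\Lang^0}$ that ${\Pr}_{\DCB}\big( \udot\sbt(\gquote{\theta}, \udot{\textrm{gq}}(u), \gquote{y}) \udot\rightarrow \udot\sbt(\gquote{\theta'}, \udot{\textrm{gq}}(f(u)), \gquote{y}) \big)$. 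Applying $\mathsf{R}_\DCB$ and then $\textsf{UK}^\Kn$ yields, for all $u \in \udot{\Lang^0}$ and all $x \in \Ag$, that $\Kn\big(x, \udot\sbt(\gquote{\theta}, \udot{\textrm{gq}}(u), \gquote{y})\big) \rightarrow \Kn\big(x, \udot\sbt(\gquote{\theta'}, \udot{\textrm{gq}}(f(u)), \gquote{y})\big)$. Now fix $u \in \udot{\Lang^0}$ with $\Psi_A(u, \gquote{\theta})$, and take $x \in \Ag$ with $B(x)$. By the third hypothesis $A(x)$ holds, so $\Psi_A(u, \gquote{\theta})$ gives both $\Kn(x, u)$ and $\Kn\big(x, \udot\sbt(\gquote{\theta}, \udot{\textrm{gq}}(u), \gquote{y})\big)$; the fourth hypothesis turns the former into $\Kn(x, f(u))$, and the implication just derived turns the latter into $\Kn\big(x, \udot\sbt(\gquote{\theta'}, \udot{\textrm{gq}}(f(u)), \gquote{y})\big)$. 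These are exactly the two conjuncts demanded by $\Psi_B(f(u), \gquote{\theta'})$, so the generalized $(\dagger)$ is established.

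Combining this with the first two hypotheses of the theorem, under ${\Pr}_{\DCB}(\gquote{\cdots})$ we obtain for each $u \in \udot{\Lang^0}$ the chain $\theta(u) \rightarrow \Psi_A(u, \gquote{\theta}) \rightarrow \Psi_B(f(u), \gquote{\theta'}) \rightarrow \theta'(f(u))$ (the last step by instantiating the second hypothesis at $f(u) \in \udot{\Lang^0}$), which is precisely the displayed conditional; L\"ob's theorem then gives the conclusion. I expect the main obstacle to be the coding bookkeeping around the $\DCB$-definable function $f$ in the proof of the generalized $(\dagger)$: one must check, via the relevant $\Base$-provable commutation lemmas, that distributing the internal substitution across $\gquote{\theta(y) \rightarrow \theta'(f(y))}$ at the value $u$ really produces the codes $\udot\sbt(\gquote{\theta}, \udot{\textrm{gq}}(u), \gquote{y})$ and $\udot\sbt(\gquote{\theta'}, \udot{\textrm{gq}}(f(u)), \gquote{y})$ appearing in $\Psi_A$ and $\Psi_B$ — i.e.\ that $f$ interacts correctly with numerals and G\"odel quoting and maps codes of sentences to codes of sentences (the latter also being needed to apply $\mathsf{R}_\DCB$ and the second hypothesis). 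Apart from this, the argument is a routine strengthening of Theorem~\ref{Thm:Implied_CK}, with the inclusion $\forall x(B(x) \rightarrow A(x))$ and the $f$-monotonicity of knowledge supplying exactly what is needed to pass from $\Psi_A(u, \gquote{\theta})$ to $\Psi_B(f(u), \gquote{\theta'})$.
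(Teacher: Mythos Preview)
Your proposal is correct and follows essentially the same route as the paper: establish the generalized $(\ddagger)$ that ${\Pr}_{\DCB}(\gquote{\forall y \in \udot{\Lang^0}\,(\theta(y)\rightarrow\theta'(f(y)))})$ implies $\forall u \in \udot{\Lang^0}\,(\Psi_A(u,\gquote{\theta})\rightarrow\Psi_B(f(u),\gquote{\theta'}))$ via $\mathsf{R}_\DCB$ and $\mathsf{UK}^\Kn$, then combine with the first two hypotheses and apply L\"ob's theorem. Your write-up is in fact more detailed than the paper's, which merely indicates the amendments to the proof of Theorem~\ref{Thm:Implied_CK}; your remarks on the coding bookkeeping around $f$ are apt but routine.
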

\begin{proof}
It suffices to indicate how the proof of Theorem \ref{Thm:Implied_CK} is amended. The key difference is that now we need to show that $\DCB $ proves
\begin{align*}
& {\Pr}_{\DCB }\Big( \gquote{\forall y \in \udot{\Lang^0} \dt \big( \theta(y) \rightarrow \theta'(f(y)) \big)} \Big) \rightarrow \\
& \forall u \in \udot{\Lang^0} \dt \big( \Psi_A(u, \gquote{\theta}) \rightarrow \Psi_B(f(u), \gquote{\theta'}) \big).  \tag{$\ddagger$}
\end{align*}
Just as before, we prove from ${\Pr}_{\DCB }\Big( \gquote{\forall y \in \udot{\Lang^0} \dt \big( \theta(y) \rightarrow \theta'(f(y)) \big)} \Big)$ that
\begin{align*}
& \forall u \in \udot{\Lang^0} \dt \forall x \dt \Big( \Kn \big(x, \udot\sbt(\gquote{\theta}, \udot{\textrm{gq}}(u), \gquote{y}] \big) \rightarrow \Kn \big(x, \udot\sbt(\gquote{\theta'}, \udot{\textrm{gq}}(f(u)), \gquote{y}) \big) \Big).
\end{align*}
From this and the new assumptions it follows that
\begin{align*}
& \forall u \in \udot{\Lang^0} \dt \big( \Psi_A(u, \gquote{\theta}) \rightarrow \Psi_B(f(u), \gquote{\theta'}) \big),
\end{align*}
establishing ($\ddagger$).
\end{proof}

\begin{cor}\label{Cor:Monotone_CK}
Suppose that $A(x), B(x)$ are $\Lang$-formulas and $f$ is a function definable in $\DCB $, such that $\DCB $ proves
\begin{align*}
 & \forall x \dt \big( B(x) \rightarrow A(x) \big), \\
 & \forall u \in \udot{\Lang^0} \dt \forall x \in B \dt \big( \Kn(x, u) \rightarrow \Kn(x, f(u)) \big). 
\end{align*}
Then $\DCB \vdash \forall u \in \udot{\Lang^0} \dt  \big( \CK_A(u) \rightarrow \CK_B(f(u)) \big).$
\end{cor}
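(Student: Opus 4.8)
The plan is to obtain Corollary~\ref{Cor:Monotone_CK} as a direct instance of Theorem~\ref{Thm:General_CK}, just as Corollary~\ref{Cor:Unique_CK} was obtained from Theorem~\ref{Thm:Implied_CK}. The idea is that $\CK_A$ and $\CK_B$ already satisfy the fixed-point \emph{equivalences} (from $\textsf{CKE}$), hence in particular the one-directional hypotheses required by Theorem~\ref{Thm:General_CK}, and the remaining two hypotheses of that theorem are precisely the two hypotheses we are assuming here.

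\textbf{Key steps.} First I would set $\theta(y) :\equiv \CK_A(y)$ and $\theta'(y) :\equiv \CK_B(y)$. By $\textsf{CKE}$ applied to $A$, $\DCB \vdash \forall u \in \udot{\Lang^0} \dt \big( \CK_A(u) \leftrightarrow \Psi_A(u, \gquote{\CK_A}) \big)$, and in particular $\DCB \vdash \forall u \in \udot{\Lang^0} \dt \big( \theta(u) \rightarrow \Psi_A(u, \gquote{\theta}) \big)$, which is the first hypothesis of Theorem~\ref{Thm:General_CK}. Similarly, $\textsf{CKE}$ applied to $B$ gives $\DCB \vdash \forall u \in \udot{\Lang^0} \dt \big( \CK_B(u) \leftrightarrow \Psi_B(u, \gquote{\CK_B}) \big)$, whence $\DCB \vdash \forall u \in \udot{\Lang^0} \dt \big( \theta'(u) \leftarrow \Psi_B(u, \gquote{\theta'}) \big)$, the second hypothesis. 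The third and fourth hypotheses of Theorem~\ref{Thm:General_CK} — namely $\forall x \dt (B(x) \rightarrow A(x))$ and $\forall u \in \udot{\Lang^0} \dt \forall x \in B \dt (\Kn(x,u) \rightarrow \Kn(x, f(u)))$ — are exactly the two assumptions of the present corollary. Theorem~\ref{Thm:General_CK} then yields $\DCB \vdash \forall u \in \udot{\Lang^0} \dt \big( \theta(u) \rightarrow \theta'(f(u)) \big)$, i.e. $\DCB \vdash \forall u \in \udot{\Lang^0} \dt \big( \CK_A(u) \rightarrow \CK_B(f(u)) \big)$, which is the conclusion.

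\textbf{Main obstacle.} There is essentially no obstacle: the proof is a bookkeeping exercise of matching the corollary's data to the theorem's hypotheses. The only point requiring a moment's care is the direction of the biconditionals — one must check that $\CK_A$ supplies the $\rightarrow$ half (to play the role of $\theta$) and $\CK_B$ the $\leftarrow$ half (to play the role of $\theta'$), which is immediate since $\textsf{CKE}$ gives the full biconditional in each case. One might also remark, as the authors tend to do, on the philosophical reading: monotonicity of defined common knowledge along a restriction $B \subseteq A$ of the agent-set, modulo a knowledge-preserving transformation $f$ of the object being commonly known.
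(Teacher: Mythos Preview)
Your proposal is correct and matches the paper's proof exactly: the paper simply says it is a direct application of Theorem~\ref{Thm:General_CK} with $\CK_A$ as $\theta$ and $\CK_B$ as $\theta'$, and your write-up just spells out how the four hypotheses are met.
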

\begin{proof}
A direct application of Theorem \ref{Thm:General_CK} with $\CK_A$ as $\theta$ and $\CK_B$ as $\theta'$.
\end{proof}

The following theorem shows that the deductive closure properties axiomatized for $\Kn$ in $\DCB$ carry over to each common knowledge predicate $\CK_A$, without the need for any primitive predicate or axiom for $\CK_A$:

\begin{thm}[Deductive closure of common knowledge]\label{Thm:CK_main}
Let $A(x) \in \Lang$.
\begin{enumerate}[{\rm (a)}]
\item $\DCB  \vdash \forall \phi, \psi \in \udot{\Lang^0} \dt \big( \CK_A(\phi) \wedge \CK_A(
\phi \udot\rightarrow \psi) \rightarrow \CK_A(\psi) \big)$
\item $\DCB  \vdash \forall \phi \in \udot{\Lang^0} \dt \big( {\Pr}_{\DCB }(\phi) \rightarrow \CK_A(\phi) \big)$
\item For each $\phi \in \Lang^0$: $\DCB \vdash \phi \phantom{~~} \Longrightarrow \phantom{~~} \DCB \vdash \CK_A(\gquote{\phi})$
\end{enumerate}
\end{thm}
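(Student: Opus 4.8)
The plan is to obtain all three parts from Theorem~\ref{Thm:Implied_CK}, exploiting that its second hypothesis is, for the choice $\theta' := \CK_A$, exactly the $\leftarrow$-direction of \textsf{CKE}: $\DCB \vdash \forall u \in \udot{\Lang^0} \dt \big( \CK_A(u) \leftarrow \Psi_A(u, \gquote{\CK_A}) \big)$. So for each of (a) and (b) I only need to choose a suitable $\theta$ and verify the first hypothesis $\DCB \vdash \forall u \in \udot{\Lang^0} \dt \big( \theta(u) \rightarrow \Psi_A(u, \gquote{\theta}) \big)$; Theorem~\ref{Thm:Implied_CK} then yields $\DCB \vdash \forall u \in \udot{\Lang^0} \dt \big( \theta(u) \rightarrow \CK_A(u) \big)$, which will be, or immediately imply, the claim. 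Throughout I use the routine fact that $\DCB$ proves knowledge is closed under conjunction, $\DCB \vdash \forall a, b \in \udot{\Lang^0} \dt \forall x \in \Ag \dt \big( \Kn(x,a) \wedge \Kn(x,b) \rightarrow \Kn(x, a \udot\wedge b) \big)$; this follows by applying $\mathsf{R}_\DCB$ to the provable fact $\forall a, b \in \udot{\Lang^0} \dt {\Pr}_{\DCB}\big(a \udot\rightarrow (b \udot\rightarrow (a \udot\wedge b))\big)$ and then using $\mathsf{UK}^\Kn$ twice.

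For (b) I take $\theta := {\Pr}_{\DCB}$, so that $\udot\sbt(\gquote{{\Pr}_{\DCB}}, \udot{\textrm{gq}}(u), \gquote{y})$ is the code of the sentence ``$u$ is $\DCB$-provable''. Working in $\DCB$, assume ${\Pr}_{\DCB}(u)$ and fix $x$ with $\Ag(x) \wedge A(x)$: the conjunct $\Kn(x,u)$ is immediate from $\mathsf{R}_\DCB$, and the conjunct $\Kn\big(x, \udot\sbt(\gquote{{\Pr}_{\DCB}}, \udot{\textrm{gq}}(u), \gquote{y})\big)$ follows by invoking the formalized derivability condition $\DCB \vdash \forall u \in \udot{\Lang^0} \dt \big( {\Pr}_{\DCB}(u) \rightarrow {\Pr}_{\DCB}(\udot\sbt(\gquote{{\Pr}_{\DCB}}, \udot{\textrm{gq}}(u), \gquote{y})) \big)$ (available because $\DCB \vdash \PA$) and then $\mathsf{R}_\DCB$ once more. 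This gives (b), and part (c) is then immediate: if $\DCB \vdash \phi$ then $\DCB \vdash {\Pr}_{\DCB}(\gquote{\phi})$, so $\DCB \vdash \CK_A(\gquote{\phi})$ by (b).

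For (a) I take $\theta(y) := \exists \phi \in \udot{\Lang^0} \dt \big( \CK_A(\phi) \wedge \CK_A(\phi \udot\rightarrow y) \big)$, noting that $\DCB \vdash \forall u \in \udot{\Lang^0} \dt \big( \theta(u) \rightarrow \CK_A(u) \big)$ is logically equivalent to the statement of (a). To check the first hypothesis, work in $\DCB$, assume $\theta(u)$, fix a witness $\phi$ with $\CK_A(\phi) \wedge \CK_A(\phi \udot\rightarrow u)$, and fix $x$ with $\Ag(x) \wedge A(x)$. Unfolding $\CK_A(\phi)$ and $\CK_A(\phi \udot\rightarrow u)$ by \textsf{CKE} gives $\Kn(x, \phi)$, $\Kn(x, \phi \udot\rightarrow u)$, $\Kn\big(x, \udot\sbt(\gquote{\CK_A}, \udot{\textrm{gq}}(\phi), \gquote{y})\big)$ and $\Kn\big(x, \udot\sbt(\gquote{\CK_A}, \udot{\textrm{gq}}(\phi \udot\rightarrow u), \gquote{y})\big)$. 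Now $\Kn(x, u)$ follows from the first two by $\mathsf{UK}^\Kn$; and $\Kn\big(x, \udot\sbt(\gquote{\theta}, \udot{\textrm{gq}}(u), \gquote{y})\big)$ follows by combining the last two with conjunction-closure and then applying $\mathsf{R}_\DCB$ and $\mathsf{UK}^\Kn$ to the formalized existential generalization $\DCB \vdash \forall \phi, u \in \udot{\Lang^0} \dt {\Pr}_{\DCB}\Big( \big( \udot\sbt(\gquote{\CK_A}, \udot{\textrm{gq}}(\phi), \gquote{y}) \udot\wedge \udot\sbt(\gquote{\CK_A}, \udot{\textrm{gq}}(\phi \udot\rightarrow u), \gquote{y}) \big) \udot\rightarrow \udot\sbt(\gquote{\theta}, \udot{\textrm{gq}}(u), \gquote{y}) \Big)$, in which the existential quantifier of $\theta$ is witnessed by $\phi$. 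Thus the first hypothesis holds and Theorem~\ref{Thm:Implied_CK} yields (a).

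The appeals to first-order logic, $\mathsf{R}_\DCB$, $\mathsf{UK}^\Kn$ and \textsf{CKE} are routine; I expect the main obstacle to be the syntactic bookkeeping behind the last step of (a): one must distribute $\udot\sbt$ through the connectives and the inner quantifier of $\gquote{\theta}$ and check, by a provable-evaluation argument on the term-forming operations, that the code obtained from $\udot\sbt(\gquote{\theta}, \udot{\textrm{gq}}(u), \gquote{y})$ by instantiating its existential quantifier with $\phi$ is provably equal to the relevant combination of $\udot\sbt(\gquote{\CK_A}, \udot{\textrm{gq}}(\phi), \gquote{y})$ and $\udot\sbt(\gquote{\CK_A}, \udot{\textrm{gq}}(\phi \udot\rightarrow u), \gquote{y})$. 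This is the same ``distributing the substitution'' step as in the proof of Theorem~\ref{Thm:Implied_CK}, only a little heavier here because $\theta$ itself contains a quantifier and two occurrences of $\CK_A$. As alternatives, (a) can instead be proved by applying L\"ob's theorem directly to the sentence of (a), paralleling the proof of Theorem~\ref{Thm:Implied_CK} almost verbatim; and a non-uniform form of (a), one theorem per pair $\phi,\psi$, follows more cheaply from Lemma~\ref{Lem:Conj_CK} together with Corollary~\ref{Cor:Monotone_CK}.
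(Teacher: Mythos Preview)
Your proofs of (b) and (c) are essentially the paper's: both apply Theorem~\ref{Thm:Implied_CK} with $\theta = {\Pr}_{\DCB}$ and $\theta' = \CK_A$, invoking $\mathsf{R}_\DCB$ together with the formalized derivability condition ${\Pr}_{\DCB}(u) \rightarrow {\Pr}_{\DCB}(\udot\sbt(\gquote{{\Pr}_{\DCB}}, \udot\gq(u), \gquote{y}))$, and (c) follows from (b) exactly as you say.

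For (a) you take a genuinely different route. The paper does not apply Theorem~\ref{Thm:Implied_CK} directly; instead it defines a syntactic function $f$ with $f(\phi \udot\wedge (\phi \udot\rightarrow \psi)) = \psi$, observes that $\Kn$ is closed under $f$, applies Corollary~\ref{Cor:Monotone_CK} to obtain $\CK_A(\phi \udot\wedge (\phi \udot\rightarrow \psi)) \rightarrow \CK_A(\psi)$, and then invokes Lemma~\ref{Lem:Conj_CK} to reduce $\CK_A(\phi) \wedge \CK_A(\phi \udot\rightarrow \psi)$ to $\CK_A(\phi \udot\wedge (\phi \udot\rightarrow \psi))$. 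So what you flag in your last sentence as the ``cheap'' alternative is in fact the paper's argument. Your approach---setting $\theta(y) := \exists \phi \in \udot{\Lang^0}\,(\CK_A(\phi) \wedge \CK_A(\phi \udot\rightarrow y))$ and feeding it straight into Theorem~\ref{Thm:Implied_CK}---is correct and has the virtue of yielding the internally quantified statement of (a) directly, whereas the paper's route passes through Lemma~\ref{Lem:Conj_CK}, which as stated is schematic in the external sentences $\phi,\psi$. The price you pay is exactly the heavier syntactic bookkeeping you anticipate: verifying that the instance of $\udot\sbt(\gquote{\theta}, \udot\gq(u), \gquote{y})$ obtained by witnessing the existential with $\phi$ is $\DCB$-provably implied by the conjunction of $\udot\sbt(\gquote{\CK_A}, \udot\gq(\phi), \gquote{y})$ and $\udot\sbt(\gquote{\CK_A}, \udot\gq(\phi \udot\rightarrow u), \gquote{y})$. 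The paper's route avoids this by pushing the work into the definable function $f$ and the already-established Lemma~\ref{Lem:Conj_CK}.
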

\begin{proof}
\begin{enumerate}[{\rm (a)}]
\item $\DCB $ defines a function $f$ such that
\begin{align*}
\DCB  \vdash \forall \phi, \psi \in \udot{\Lang^0} \dt \big( f(\phi \udot\wedge (\phi \udot\rightarrow \psi)) = \psi \big).
\end{align*}
Moreover, 
\begin{align*}
\DCB  \vdash \forall \phi, \psi \in \udot{\Lang^0} \dt \forall x \in \Ag \dt \big( \Kn(x, \phi \udot\wedge (\phi \udot\rightarrow \psi)) \rightarrow \Kn(x, \psi) \big).
\end{align*}
So by Corollary \ref{Cor:Monotone_CK},
\begin{align*}
\DCB  \vdash \CK_A (\phi \udot\wedge (\phi \udot\rightarrow \psi) ) \rightarrow \CK_A(\psi).
\end{align*}
By Lemma \ref{Lem:Conj_CK},
\begin{align*}
\DCB  \vdash \CK_A(\phi) \wedge \CK_A(\phi \udot\rightarrow \psi) \leftrightarrow \CK_A(\phi \udot\wedge (\phi \udot\rightarrow \psi)),
\end{align*}
whence the result follows.
\item By Theorem \ref{Thm:Implied_CK}, with $\Pr_\DCB$ as $\theta$ and with $\CK_A$ as $\theta'$, it suffices to show that
\begin{align*}
\DCB  \vdash \forall u \in \udot{\Lang^0} \dt \big( {\Pr}_{\DCB }(u) \rightarrow \Psi_A(u, \gquote{{\Pr}_{\DCB }(y)}) \big).
\end{align*}
By $\mathsf{R}_\DCB $,
\begin{align*}
\DCB  \vdash \forall u \in \udot{\Lang^0} \dt \big( {\Pr}_{\DCB }(u) \rightarrow \forall x \in \Ag \dt \Kn(x, u) \big).
\end{align*}
It is a well-known property of provability that
\begin{align*}
\DCB  \vdash \forall u \in \udot{\Lang^0} \dt \big[ {\Pr}_{\DCB }(u) \rightarrow {\Pr}_{\DCB }\big( \udot\sbt(\gquote{{\Pr}_{\DCB }(y)}, \udot\gq(u), \gquote{y}) \big) \big].
\end{align*}
So by $\mathsf{R}_\DCB $,
\begin{align*}
\DCB  \vdash \forall u \in \udot{\Lang^0} \dt \big[ {\Pr}_{\DCB }(u) \rightarrow \forall x \in \Ag \dt \Kn \big( x, \udot\sbt(\gquote{{\Pr}_{\DCB }(y)}, \udot\gq(u), \gquote{y}) \big) \big].
\end{align*}
Combining the second and fourth displayed item, we obtain the first displayed item, as desired.
\item Assume that $\DCB \vdash \phi$. Since $\Pr_\DCB$ represents provability in $\DCB$, we have $\Base \vdash \Pr_\DCB(\gquote{\phi})$. Hence, $\DCB \vdash \CK_A(\gquote{\phi})$, by the previous item of this theorem. \qedhere
\end{enumerate}
\end{proof}

The above theorem establishes that $\KT$ proves the generalizations to common knowledge of its axioms $\mathsf{UK}^\Kn$ and  $\mathsf{R}_\DCB$. It remains open whether $\KT$ admits full necessitation for common knowledge, i.e. whether $\KT \vdash \phi \phantom{~~} \Longrightarrow \phantom{~~} \KT \vdash \CK_A(\gquote{\phi})$.

\section{Revision semantics for multi-agent knowledge}\label{Sec:Semantics}

The revision semantics, introduced by Gupta and Herzberger in \cite{Gup82}, \cite{Her82a} and \cite{Her82b}, is an approach to resolving the liar paradox. The basic idea is to start with a ground model $\langle \mathcal{M}, P_0 \rangle$ (where $P_0$ is an interpretation of the truth predicate), and iteratively revise the interpretation of the truth predicate to $P_1, P_2, \cdots$, by the recursion $P_{n+1} =_\df \big\{ \mathrm{gc}(\sigma) \mid \langle \mathcal{M}, P_n \rangle \models \sigma \big\}$. The map defined by $P \mapsto \big\{ \mathrm{gc}(\sigma) \mid \langle \mathcal{M}, P \rangle \models \sigma \big\}$, for all $P \subseteq \mathbb{N}$, is called the {\em revision semantic operator}.

It follows from McGee's theorem \cite{McG85}, that $\FS$ is $\omega$-inconsistent. In particular, it has no standard model. However, there is a connection between $\FS$ and the revision semantics, which vindicates $\FS$: Take the standard model of arithmetic expanded with an arbitrary interpretation of the truth predicate as the ground model; call it $\langle \mathbb{N}, P_0 \rangle$. In \cite{Hal94}, Halbach showed that $\FS$ is {\em locally validated} in the revision semantics, meaning that for each finite fragment of $\FS $, there is $n \in \mathbb{N}$, such that for all $m \geq n$, $\langle \mathbb{N}, P_m \rangle$ is a standard model of that fragment. Since every proof only uses finitely many axioms, one only needs a finite fragment for any particular deductive application of the system. Hence, for any application of $\FS$, an adequate standard model can be obtained by a finite iteration of the revision semantic operator.

In \cite{Ste14}, Stern showed that necessitation and truth (treated as predicates) can be formalized in a range of systems extending $\FS$, and provided a flexible generalized revision semantics locally validating them. Since $\KT$ axiomatizes knowledge for multiple agents, I have generalized this semantics slightly. It builds on a conventional possible worlds semantics; each world is a standard $\Lang_-$-structure satisfying $\Base$, and each agent is associated with a binary accessibility relation on the set of worlds. It involves evaluation functions, which map each world to the set of G\"odel codes of true sentences in that world. Any evaluation function induces interpretations of $\Kn$ and $\Tr$ in each world, expanding each world to an $\Lang$-structure. The revision semantic operator maps each evaluation function to a revised evaluation function, thus inducing a revised $\Lang$-expansion of each world.

Theorem \ref{Thm:KT_semantics} establishes that $\KT + \mathsf{UBF} + \mathsf{IA} + \mathsf{In}^+ + \mathsf{In}^-$ is locally validated in this revision semantics, in the sense that under certain natural assumptions, any finite fragment of $\KT + \mathsf{UBF} + \mathsf{IA} + \mathsf{In}^+ + \mathsf{In}^-$ is eventually satisfied in each world after finitely many revisions. In particular, any finite fragment of $\KT + \mathsf{UBF} + \mathsf{IA} + \mathsf{In}^+ + \mathsf{In}^-$ has a standard model, and $\KT + \mathsf{UBF} + \mathsf{IA} + \mathsf{In}^+ + \mathsf{In}^-$ is consistent. Lemmata \ref{Lemma:DCB_starting_point} and \ref{Lemma:DCB_stable} constitute two key steps in the proof of this theorem, establishing that $\DCB$ can be validated in the revision semantics.

The results of \cite{Ste14} are developed in a setting where the base system is $\PA$ extended with induction for the language obtained by augmenting $\Lang_\PA$ with the predicates $\mathrm{N}$ and $\Tr$. \cite{Ste14} also provides an explanation for how to generalize to other base systems, which applies to $\PA(\Lang)$. I assume that $\Base = \PA(\Lang)$ throughout this section. 

I proceed to go through the generalized definitions and theorems that are relevant for this paper. All worlds are assumed to be expansions of the standard model $\mathbb{N}$ and, for simplicity, it is assumed that the interpretation of $\Ag$ is identical in all worlds. It is straight-forward to generalize to diverse interpretations of $\Ag$. Fix a non-empty subset $G \subseteq \mathbb{N}$, whose elements are called {\em agents}. For any first-order language $L$ expanding $\Lang_\PA$, an $L$-{\em agency-frame for $G$} is a tuple $F = \langle W, (R_\alpha)_{\alpha \in G} \rangle$, where $W$ is a set of $L$-expansions of $\mathbb{N}$ interpreting $\Ag$ by $G$, and $R_\alpha$ is a binary relation on $W$, for each $\alpha \in G$. 

For the rest of this section, fix an $\Lang_-$-agency-frame $F$ for $G$. For each agent $\alpha \in G$: $\langle W, R_\alpha \rangle$ is called the $\Lang_-$-{\em Kripke-frame of $\alpha$}, the elements of $W$ are called {\em worlds}, and $R_\alpha$ is called the {\em accessibility relation of $\alpha$}. An {\em evaluation function of $F$} is a function $f : W \rightarrow \mathcal{P}(\mathbb{N})$, thought of as mapping a world to the set of G\"odel codes of true sentences in that world. $\mathrm{Val}_F$ denotes the set of all evaluation functions of $F$. For the rest of this paragraph, fix $f \in \mathrm{Val}_F$. $f_\alpha : W \rightarrow \mathcal{P}(\mathbb{N})$ and $f^\Kn : W \rightarrow \mathcal{P}(\mathbb{N} \times \mathbb{N})$ are defined as follows, for all $\alpha \in G$ and all $w \in W$:
\begin{align*}
f_\alpha(w) &=_\df \bigcap \{ f(v) \mid w R_\alpha v \} \\
f^\Kn(w) &=_\df \{ \langle \alpha, \phi \rangle \mid \alpha \in G \wedge \phi \in f_\alpha(w) \}
\end{align*}
For each $\Lang_-$-model $w \in W$, $f$ induces an $\Lang$-expansion $w^f$ of $w$, in which $\Tr$ is interpreted by $f(w)$, $\Kn^2$ is interpreted by $f^\Kn(w)$, and $\Kn^1$ is interpreted by $\big\{\phi \mid \forall \alpha \in G \dt (\langle \alpha, \phi \rangle \in f^\Kn(w))\big\}$. Thus, $f$ also induces an $\Lang$-agency-frame $F^f =_\df \big\langle W^f, (R^f_\alpha)_{\alpha \in G} \big\rangle$, where $W^f =_\df \big\{w^f \mid w \in W\big\}$ and $R^f_\alpha =_\df \big\{ \big\langle v^f, w^f \big\rangle \mid \big\langle v, w \big\rangle \in R_\alpha \big\}$, for each $\alpha \in G$. 

The {\em revision semantic operator}, $\Gamma : \mathrm{Val}_F \rightarrow \mathrm{Val}_F$, is defined as follows, for all $f \in \mathrm{Val}$ and all $w \in W$:
\begin{align*}
\big(\Gamma(f)\big)(w) =_\df \big\{ \mathrm{gc}(\phi) \mid w^f \models \phi \big\}
\end{align*}
Note that for any $\Lang$-agency-frame of the form $F^f$ (where $F$ is an $\Lang_-$-agency-frame and $f \in \mathrm{Val}_F$), $\Gamma$ induces a revised $\Lang$-agency-frame $F^{\Gamma(f)}$. This revision is iterated by iterated applications of $\Gamma$, providing a revision semantics appropriate for the setting of this paper. This revision semantics is closely related to the system $\mathsf{BEFS}$ ({\em Basic Epistemic Friedman--Sheard}), axiomatized in Figure \ref{Fig:BEFS}.\footnote{$\mathsf{BEFS}$ is analogous to Stern's system $\mathsf{BMFS}$ from \cite{Ste14}.}

The axiom $\mathsf{UNS}^\Kn$ ({\em Untyped Necessitated Substitution principle}) of $\mathsf{BEFS}$ asserts that knowledge is preserved under substitution of $\Lang_\PA$-terms that evaluate to the same value. The axiom $\mathsf{UND}^\Kn$ ({\em Untyped Necessity of Distinctness}) of $\mathsf{BEFS}$ asserts that it is known whenever two $\Lang_\PA$-terms evaluate to distinct values (the analogue for equality is provable in the system). 

\begin{prop}\label{Prop:KT_proves_BEFS}
$\KT + \mathsf{UBF} + \mathsf{IA}  \equiv \mathsf{BEFS} + \mathsf{V} + \mathsf{R}_\DCB$.
\end{prop}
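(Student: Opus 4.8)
The plan is to prove the equivalence by establishing the two inclusions, comparing the axiomatizations of $\KT + \mathsf{UBF} + \mathsf{IA}$ and $\mathsf{BEFS} + \mathsf{V} + \mathsf{R}_\DCB$ clause by clause against Figure~\ref{Fig:BEFS}. Both theories contain $\Base$, the $\FS$ axioms and rules for $\Tr$, Non-triviality, the bridging axiom $\Kn^1$-$\Kn^2$, and the rule $\NEC^\Kn$, so those parts require no work. The axioms $\mathsf{V}$ and $\mathsf{R}_\DCB$ appear explicitly in $\mathsf{BEFS} + \mathsf{V} + \mathsf{R}_\DCB$, while on the other side $\mathsf{V}$ is an axiom of $\KT$ and $\mathsf{R}_\DCB$ is inherited from $\DCB$. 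Hence the genuine content is confined to the remaining $\Kn$-axioms: on the one side $\mathsf{UK}^\Kn$ (from $\DCB$), $\mathsf{UBF}$ and $\mathsf{IA}$, and on the other the $\mathsf{K}$-style distribution clause, the interaction clause, the Barcan-type clause, and the substitution and distinctness principles $\mathsf{UNS}^\Kn$ and $\mathsf{UND}^\Kn$ of $\mathsf{BEFS}$.

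For the inclusion $\mathsf{BEFS} + \mathsf{V} + \mathsf{R}_\DCB \vdash \KT + \mathsf{UBF} + \mathsf{IA}$, the point is that $\mathsf{UK}^\Kn$, $\mathsf{UBF}$ and $\mathsf{IA}$ are, up to logical manipulation, already axioms of $\mathsf{BEFS}$: $\mathsf{UK}^\Kn$ is the distribution clause for $\Kn$ over $\udot\rightarrow$ rephrased propositionally, $\mathsf{UBF}$ is the Barcan-type clause, and $\mathsf{IA}$ is the interaction clause (with whichever implication is not taken primitive in $\mathsf{BEFS}$ recovered from $\mathsf{V}$ together with the $\FS$ axioms for $\Tr$). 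So this inclusion is essentially bookkeeping once the exact forms of the $\mathsf{BEFS}$ axioms are in hand.

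For the inclusion $\KT + \mathsf{UBF} + \mathsf{IA} \vdash \mathsf{BEFS} + \mathsf{V} + \mathsf{R}_\DCB$, the substantive step is to derive $\mathsf{UNS}^\Kn$ and $\mathsf{UND}^\Kn$, and this goes through $\mathsf{R}_\DCB$ rather than through $\NEC^\Kn$. The relevant syntactic facts — that substituting closed $\Lang_\PA$-terms of equal value yields the same formula, and that closed $\Lang_\PA$-terms of distinct value are provably distinct — are provable in $\Base$ uniformly in the terms, hence theorems of $\DCB$ with primitive-recursively presented proofs, so $\DCB$ proves ${\Pr}_{\DCB}$ of the corresponding internally universally quantified sentences; applying $\mathsf{R}_\DCB$ then yields $\mathsf{UNS}^\Kn$ and $\mathsf{UND}^\Kn$. (Note that $\NEC^\Kn$ alone is insufficient, since it licenses only $\Kn$-assertions for individual theorems, not their internally quantified generalizations.) The distribution, Barcan and interaction clauses of $\mathsf{BEFS}$ are obtained from $\mathsf{UK}^\Kn$, $\mathsf{UBF}$ and $\mathsf{IA}$; where a compositional $\Kn$-behavior is needed, one uses the technique already exhibited in \S\ref{Sec:Theory_KT}: $\NEC^\Kn$ internalizes an $\FS$-provable compositional $\Tr$-biconditional as knowledge, $\mathsf{UK}^\Kn$ converts it into the corresponding $\Kn$-biconditional inside each agent, and $\mathsf{IA}$ then turns this knowledge statement into a truth statement about $\Kn$, with the quantifier case additionally using $\mathsf{UBF}$ and $\UCT^\forall$.

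I expect the main obstacle to be ensuring the coding conventions match exactly: in particular, that the placement of $\udot{\mathrm{num}}(\alpha)$ inside coded occurrences of $\Kn$, and the precise way $\mathsf{IA}$ commutes $\Tr$ past $\Kn$, are aligned with the way the $\mathsf{BEFS}$ axioms, $\mathsf{UBF}$ and the derived principles are stated, so that the axiom-by-axiom matching is genuinely tight. None of the individual derivations is deep; the care lies in this interface between coding and the interaction axiom, and in confirming that it is $\mathsf{R}_\DCB$, not merely $\NEC^\Kn$, that delivers the internally quantified principles $\mathsf{UNS}^\Kn$ and $\mathsf{UND}^\Kn$.
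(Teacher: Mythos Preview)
Your overall strategy matches the paper's, and your identification of $\mathsf{UNS}^\Kn$ and $\mathsf{UND}^\Kn$ as the substantive content---to be obtained via $\mathsf{R}_\DCB$ rather than mere $\NEC^\Kn$---is exactly right. The paper phrases this as: since evaluation of closed $\Lang_\PA$-terms is primitive recursive, $\PA$ proves $s^\circ = t^\circ \leftrightarrow {\Pr}_\PA(s \udot= t)$, whence $\mathsf{R}_\DCB$ together with $\mathsf{UK}^\Kn$ yields $\mathsf{UNS}^\Kn$ (and similarly for $\mathsf{UND}^\Kn$). Note also that $\mathsf{UK}^\Kn$, $\mathsf{UBF}^\Kn$ and $\mathsf{IA}$ are \emph{literally} axioms of $\mathsf{BEFS}$ (Figure~\ref{Fig:BEFS}), so no ``logical manipulation'' or recovery of a missing implication is needed there.

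There is one concrete gap. You assert that both theories contain the rule $\NEC^\Kn$, but $\mathsf{BEFS}$ does not: its only $\Kn$-rule is $\Tr/\Kn$. This affects both inclusions. To get $\NEC^\Kn$ in $\mathsf{BEFS} + \mathsf{V} + \mathsf{R}_\DCB$ one composes $\NEC^\Tr$ with $\Tr/\Kn$; conversely, to get $\Tr/\Kn$ in $\KT + \mathsf{UBF} + \mathsf{IA}$ one composes $\CONEC^\Tr$ with $\NEC^\Kn$. The paper's proof makes precisely these two observations before turning to $\mathsf{UNS}^\Kn$ and $\mathsf{UND}^\Kn$; you nowhere mention $\Tr/\Kn$, so your claim that ``those parts require no work'' is false as stated. (A smaller slip: substituting closed $\Lang_\PA$-terms of equal value does not yield ``the same formula'' but a provably equivalent one---your subsequent argument shows you have the right idea, so this is only phrasing.)
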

\begin{proof}
By $\CONEC^\Tr$ and $\NEC^\Kn$, $\KT + \mathsf{UBF} + \mathsf{IA}$ admits the rule $\Tr/\Kn$. By $\NEC^\Tr$ and $\Tr/\Kn$, $\mathsf{BEFS} + \mathsf{V} + \mathsf{R}_\DCB$ admits $\NEC^\Kn$. It remains only to prove $\mathsf{UNS}^\Kn$ and $\mathsf{UND}^\Kn$ in $\KT$. Since evaluation of $\Lang_\PA$-terms is primitive recursive, $\PA$ proves that the antecedent $s^\circ = t^\circ$ of $\mathsf{UNS}^\Kn$ is equivalent to $\Pr_\PA(s \udot= t)$. Therefore, it is straight-forward to prove $\mathsf{UNS}^\Kn$ from $\mathsf{R}_\DCB$, $\mathsf{UK}^\Kn$ and $\NEC^\Kn$. The same method also works for $s^\circ \neq t^\circ$, establishing $\mathsf{UND}^\Kn$ analogously.
\end{proof}

One might feel that the untyped version of axiom $\mathsf{4}$ from modal logic,
\begin{align*}
\forall \alpha \in \Ag \dt \forall \phi \in \udot\Lang^0 \dt \Big(& \Kn(\alpha, \phi) \rightarrow \Kn\big( \alpha, \udot\Kn(\udot{\mathrm{num}}(\alpha), \udot\gq(\phi)) \big) \Big), \tag{$\mathsf{U4}$}
\end{align*}
is a straight-forward formalization of the positive introspection principle, but as established by the following proposition, it is inconsistent with $\KT$. It turns out that $\mathsf{In}^+$ is the appropriate formalization of positive introspection in this genuinely untyped setting.

\begin{prop}\label{Prop:U4_incon}
$\KT + \mathsf{U4}$ is inconsistent.
\end{prop}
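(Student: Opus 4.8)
The plan is to run a Montague-style diagonal argument, the point being that although the veracity axiom $\mathsf{V}$ is strictly weaker than the axiom $\mathsf{T}$ that powers Montague's original argument (it yields only $\Kn(\alpha,\phi)\rightarrow\Tr(\phi)$, not the disquotation $\Tr(\gquote{\phi})\rightarrow\phi$), the extra strength of $\mathsf{U4}$ compensates. First I would fix, by the G\"odel--Carnap fixed-point lemma, a sentence $\delta$ with $\DCB\vdash\delta\leftrightarrow\neg\,\exists\alpha\in\Ag\dt\Kn(\alpha,\gquote{\delta})$ --- the natural multi-agent form of Montague's sentence, read as ``$\delta$ holds exactly when no agent knows $\delta$''. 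Only the instance of $\mathsf{U4}$ for $A=\delta$ is actually needed, matching the discussion above.

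The first and main step is to derive $\KT+\mathsf{U4}\vdash\delta$, by reductio. Assume $\neg\delta$; then by the fixed point there is an agent $a$ with $\Ag(a)$ and $\Kn(a,\gquote{\delta})$. Applying the $\mathsf{U4}$-instance for $\delta$ gives $\Kn\big(a,\gquote{\Kn(\udot{\mathrm{num}}(a),\gquote{\delta})}\big)$, and two uses of $\mathsf{V}$ then yield both $\Tr(\gquote{\delta})$ and $\Tr\big(\gquote{\Kn(\udot{\mathrm{num}}(a),\gquote{\delta})}\big)$. Since $\Ag\in\Atom_{\Lang_-}^1$ and $\udot{\mathrm{num}}(a)$ lies in $\udot\Term_{\udot\Lang_\PA}^{0}$, the axiom $\UCT^\Atom$ turns $\Ag(a)$ into $\Tr\big(\gquote{\Ag(\udot{\mathrm{num}}(a))}\big)$; feeding these three truth-statements through $\UCT^\wedge$ and then $\UCT^\exists$, with $\udot{\mathrm{num}}(a)$ as the existential witness, produces $\Tr\big(\gquote{\exists\alpha\in\Ag\dt\Kn(\alpha,\gquote{\delta})}\big)$. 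Finally, $\NEC^\Tr$ applied to the fixed-point equivalence, together with the compositional clauses $\UCT^\neg,\UCT^\wedge,\UCT^\rightarrow$, gives $\Tr(\gquote{\delta})\leftrightarrow\neg\,\Tr\big(\gquote{\exists\alpha\in\Ag\dt\Kn(\alpha,\gquote{\delta})}\big)$, whence $\neg\,\Tr(\gquote{\delta})$ --- contradicting $\Tr(\gquote{\delta})$. This refutes $\neg\delta$, so $\KT+\mathsf{U4}\vdash\delta$. The second step is then immediate: by $\NEC^\Kn$ we obtain $\KT+\mathsf{U4}\vdash\forall\alpha\in\Ag\dt\Kn(\alpha,\gquote{\delta})$, so by $\textsf{Non-triviality}$ also $\exists\alpha\in\Ag\dt\Kn(\alpha,\gquote{\delta})$; but $\delta$ and the fixed point give $\neg\,\exists\alpha\in\Ag\dt\Kn(\alpha,\gquote{\delta})$. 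Hence $\KT+\mathsf{U4}$ is inconsistent.

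The delicate part is the first step, and in particular the exact shape of the fixed point. Because $\mathsf{V}$ does not permit descending from $\Tr(\gquote{\delta})$ to $\delta$, the role of $\mathsf{U4}$ is precisely to manufacture the auxiliary truth-statement about $\Kn(\udot{\mathrm{num}}(a),\gquote{\delta})$ that, once reassembled by the compositional axioms, reproduces $\Tr(\gquote{\delta})$ with the opposite sign. It is essential that $\delta$ be chosen so that this reassembly runs through an \emph{existential} truth-claim: $\UCT^\exists$ lets one establish such a claim just by exhibiting the numeral $\udot{\mathrm{num}}(a)$ as a witness, whereas the ``dual'' diagonal with $\neg\,\forall$ in place of $\neg\,\exists$ would instead force control of $\Tr$ of a \emph{universally} quantified formula over all closed $\Lang$-terms --- including terms formed with $u$, about which $\FS$ proves nothing. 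One also wants $\delta$ to be a genuine sentence, so that $\gquote{\delta}$ is a closed term and no capture bookkeeping with $\udot{\mathrm{num}}$ intervenes; this is why the fixed point is taken in this sentential form rather than as an agent-indexed schema.
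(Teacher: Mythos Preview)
Your argument is correct, but it takes a different route from the paper's. The paper uses the unary fixed point $\delta\leftrightarrow\neg\Kn^1(\gquote{\delta})$ and, crucially, runs the contradiction \emph{inside} $\Kn$ rather than inside $\Tr$: from $\Kn(\gquote{\delta})$ and $\mathsf{U4}$ one gets $\Kn(\gquote{\Kn(\gquote{\delta})})$; since $\Base\vdash\Kn(\gquote{\delta})\leftrightarrow\neg\delta$, deductive closure of $\Kn$ (via $\NEC^\Kn$ and $\mathsf{UK}^\Kn$) yields $\Kn(\gquote{\neg\delta})$ and hence $\Kn(\gquote{\bot})$; a single application of $\mathsf{V}$ then gives $\Tr(\gquote{\bot})$, and $\UCT^\Atom$ finishes. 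Your approach instead exports to the truth level early (two uses of $\mathsf{V}$) and then rebuilds the diagonal sentence using $\NEC^\Tr$ and the compositional clauses $\UCT^\wedge,\UCT^\exists,\UCT^\neg$. What your route buys is that the reductio step uses nothing about $\Kn$ beyond $\mathsf{U4}$ and $\mathsf{V}$; what the paper's route buys is brevity and a much lighter call on the $\FS$ machinery (only $\UCT^\Atom$, no $\NEC^\Tr$, no quantifier clauses). Note also that your closing claim that the existential shape of $\delta$ is \emph{essential} is too strong: it is essential for \emph{your} strategy of reassembling via $\UCT^\exists$, but the paper's $\neg\Kn^1$ fixed point is literally the $\neg\forall$ form you warn against, and it works because the paper never touches $\UCT^\forall$---the fixed-point equivalence is transported by $\NEC^\Kn$ into $\Kn$, not by $\NEC^\Tr$ into $\Tr$.
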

\begin{proof}
Consider $\delta$, such that $\Base \vdash \delta \leftrightarrow \neg \Kn(\gquote{\delta})$. Assume $\Kn(\gquote{\delta})$. Then we obtain $\Kn \big(\gquote{\Kn(\gquote{\delta})} \big)$ from $\mathsf{U4}$. But $\Base \vdash \neg \delta \leftrightarrow \Kn(\gquote{\delta})$, so by basic deductive closure (following from $\NEC^\Kn$ and $\mathsf{UK}^\Kn$), we obtain $\Kn(\gquote{\neg \delta})$ and $\Kn(\bot)$. Now by $\mathsf{V}$, we have $\Tr(\bot)$, from which $\bot$ is obtained by the axiom $\UCT^\Atom$ of $\FS$. This proves $\neg \Kn(\gquote{\delta})$, which is equivalent to $\delta$. Having proved $\delta$, we obtain $\Kn(\delta)$ by $\NEC^\Kn$, which is equivalent to $\neg \delta$, a contradiction.
\end{proof}

\begin{figure}

\caption{Axioms and rules of $\mathsf{BEFS}$}
\label{Fig:BEFS}

\begin{center}
 \vspace{12pt}
{\bf The system $\mathsf{BEFS}$}

\vspace{12pt}
$\mathsf{BEFS}$ extends $\FS $ with the following axioms and rules:

\vspace{12pt}
{\bf Axioms of $\mathsf{BEFS}$}
\end{center}
\begin{align*}
\exists \alpha \dt \big(& \Ag(\alpha) \big) \tag{$\textsf{Non-triviality}$} \\  
\forall \phi \in \udot\Lang^{{0}} \dt \big(& \Kn^1(\phi) \leftrightarrow \forall \alpha \in \Ag \dt \Kn^2(\alpha, \phi) \big) \tag{$\Kn^1$-$\Kn^2$} \\
\forall \alpha \in \Ag \dt \forall \phi, \psi \in \udot \Lang^{{0}} \dt \Big(& \big( \Kn(\alpha, \phi) \wedge \Kn(\alpha, \phi \udot\rightarrow \psi) \big)  \rightarrow \Kn(\alpha, \psi) \Big) \tag{$\textsf{UK}^\Kn$} \\
\forall \alpha \in \Ag \dt \forall v \in \udot \Var \dt \forall s, t \in \udot \Term_{\Lang_\PA}^0 \dt \forall \phi \in \udot \Lang^1[v] \dt \Big(& s^\circ = t^\circ \rightarrow \Kn\big(\alpha, \udot\sbt(\phi, s, v)) \leftrightarrow \Kn(\alpha, \udot\sbt(\phi, t, v) \big) \Big) \tag{$\mathsf{UNS}^\Kn$} \\
\forall \alpha \in \Ag \dt \forall s, t \in \udot\Term_{\udot\Lang_\PA}^0 \dt \Big(& s^\circ \neq t^\circ \rightarrow \big( \Kn(\alpha, s \udot\neq t) \big) \Big) \tag{$\mathsf{UND}^\Kn$} \\
\forall \alpha \in \Ag \dt \forall v \in \udot\Var \dt \forall \phi \in \udot\Lang^1[v] \dt \Big(& \big( \forall t \in \udot\Term_{\udot\Lang_\PA}^0 \Kn(\alpha, \udot\sbt(\phi, t, v)) \big) \rightarrow \Kn(\alpha, \udot\forall v \phi) \Big) \tag{$\mathsf{UBF}^\Kn$} \\
\forall \alpha \in \Ag \dt \forall \phi \in \udot\Lang^0 \dt \Big(& \Kn\big(\alpha, \udot\Tr(\udot\gq(\phi)) \big) \leftrightarrow \Tr\big( \udot\Kn(\udot{\mathrm{num}}(\alpha), \udot\gq(\phi)) \big) \Big) \tag{$\mathsf{IA}$} 
\end{align*}

\vspace{6pt}
\begin{center} 
{\bf Rules of $\mathsf{BEFS}$}
\end{center}
\begin{align*}
\vdash \Tr(\gquote{\phi}) \phantom{~~} &\Longrightarrow \phantom{~~} \vdash \forall \alpha \in \Ag \dt \Kn(\alpha, \gquote{\phi}) \textrm{, for each $\phi \in \Lang^0$.}  \tag{$\Tr/\Kn$} 
\end{align*}

\end{figure}

A {\em pseudo-system} is defined as a recursively enumerable set of sentences in a first-order language.  Pseudo-systems constitute an auxiliary tool for approximating systems. For each $2 \leq n < \omega$, the pseudo-systems $\mathsf{BEFS}_n$ and $\mathsf{EFS}_n$ are defined as $\mathsf{BEFS}$ and $\mathsf{EFS}$, respectively, except that at most a total of $n - 1$ applications of $\NEC^\Tr$ and $\CONEC^\Tr$ are allowed in a proof. The systems $\mathsf{BEFS}_0$ and $\mathsf{EFS}_0$ are defined as $\PA(\Lang)$. The systems $\mathsf{BEFS}_1$ and $\mathsf{EFS}_1$ are given by the axioms of $\mathsf{BEFS}$ and $\mathsf{EFS}$, respectively, in addition to the axiom $\mathsf{R}_{\PA(\Lang)}^\Tr$ below, and the deductive rule $\Tr / \Kn$.
\begin{align*}
\forall \phi \in \Lang^0 \dt \big( {\Pr}_{\PA(\Lang)}(\phi) \rightarrow \Tr(\phi) \big) \tag{$\mathsf{R}_{\PA(\Lang)}^\Tr$}
\end{align*}

For any function $h : A \rightarrow B$ and any $A' \subseteq A$, $h[A'] =_\df \{h(a) \mid a \in A'\}$. Let $R \subseteq A \times A$. $R$ is {\em Euclidean} if $\forall a, b, c \in A \dt \big( (aRb \wedge aRc) \rightarrow bRc \big)$.
\begin{thm}\label{Thm:RevisionBEFS}
Let $f \in \mathrm{Val}_F$ and let $n < \omega$.
\begin{enumerate}[{\rm (a)}]
\item Then
\begin{align*}
f \in \Gamma^n[\mathrm{Val}_F] &\Longleftrightarrow \forall w \in W \dt \big( w^f \models \mathsf{BEFS}_n \big). 
\end{align*}
\item If $R_\alpha$ is reflexive and $1 \leq n$, then
\begin{align*}
f \in \Gamma^n[\mathrm{Val}_F] &\Longleftrightarrow \forall w \in W \dt \big( w^f \models \mathsf{BEFS}_n + \mathsf{V} \big).
\end{align*}
\item If $R_\alpha$ is transitive and $1 \leq n$, then
\begin{align*}
f \in \Gamma^n[\mathrm{Val}_F] &\Longleftrightarrow \forall w \in W \dt \big( w^f \models \mathsf{BEFS}_n + \mathsf{In}^+ \big). 
\end{align*}
\item If $R_\alpha$ is Euclidean and $1 \leq n$, then
\begin{align*}
f \in \Gamma^n[\mathrm{Val}_F] &\Longleftrightarrow \forall w \in W \dt \big( w^f \models \mathsf{BEFS}_n + \mathsf{In}^- \big). 
\end{align*}
\end{enumerate}
\end{thm}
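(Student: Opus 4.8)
The plan is to prove the four items together by analysing the single revision step $\Gamma$ and what it forces on an evaluation function. The conceptual core is item (a), and the main tool is the observation that $f \in \Gamma^n[\mathrm{Val}_F]$ iff $f = \Gamma(g)$ for some $g \in \Gamma^{n-1}[\mathrm{Val}_F]$, together with an induction on $n$. First I would unpack the definition: $f \in \Gamma[\mathrm{Val}_F]$ iff there is $g \in \mathrm{Val}_F$ with $f(w) = \{\mathrm{gc}(\phi) \mid w^g \models \phi\}$ for all $w$. So membership in the image of $\Gamma$ says precisely that, in each world $w$, the set $f(w)$ coincides with the theory (set of Gödel codes of true sentences) of the \emph{previous} $\Lang$-expansion $w^g$. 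The strategy for the ``$\Rightarrow$'' direction of (a) is then: assume $f = \Gamma(g)$, and verify that each axiom of $\mathsf{BEFS}$ holds in every $w^f$ by reading it off from the correspondence between $f(w)$ and $\mathrm{Th}(w^g)$ — the compositional axioms $\UCT^\Atom, \UCT^\neg, \UCT^\rightarrow, \dots$ hold because $\mathrm{Th}(w^g)$ is closed under the Tarskian clauses; $\textsf{UK}^\Kn$ holds because $f_\alpha(w) = \bigcap\{g(v) \mid wR_\alpha v\}$ is an intersection of deductively closed theories, hence closed under modus ponens; $\mathsf{UNS}^\Kn$ and $\mathsf{UND}^\Kn$ hold because each $g(v)$ is the theory of a model expanding $\mathbb{N}$, so it respects term evaluation; $\mathsf{UBF}^\Kn$ holds because $\mathbb{N}$ has only standard elements, so a universal sentence is in $g(v)$ once all its numeral instances are; $\mathsf{IA}$ holds essentially by definition of $f^\Kn$ and $f$; and $\Kn^1$-$\Kn^2$, $\textsf{Non-triviality}$ are immediate from the set-up. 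For the number-of-$\NEC^\Tr$-applications bookkeeping, I would set up the induction so that $\Gamma^n[\mathrm{Val}_F]$ corresponds to ``$n$ revision steps available'', matching the definition of $\mathsf{BEFS}_n$ as permitting $n-1$ applications of the truth-revision rules (with $\mathsf{BEFS}_0 = \PA(\Lang)$, $\mathsf{BEFS}_1$ using $\mathsf{R}^\Tr_{\PA(\Lang)}$ and $\Tr/\Kn$). The base case $n=0$ is just that every $w$ is a model of $\PA(\Lang)$; the inductive step uses that one more revision lets one internalise one more application of $\NEC^\Tr/\CONEC^\Tr$ inside $\Tr$, which is exactly how $\mathsf{R}^\Tr_{\PA(\Lang)}$ and the $\Tr/\Kn$ rule are mimicked.

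For the ``$\Leftarrow$'' direction of (a), I would argue contrapositively / constructively: given $f$ with $w^f \models \mathsf{BEFS}_n$ for all $w$, I need to produce a witness $g \in \Gamma^{n-1}[\mathrm{Val}_F]$ with $\Gamma(g) = f$. The natural candidate is to run the revision \emph{backwards}, which is not literally possible, so instead one shows $f$ itself lies in the image: define $g$ so that $g(w)$ is the ``de-revised'' theory; more practically, one shows that the constraints imposed by all the $\mathsf{BEFS}_n$ axioms on $f(w)$ are exactly the constraints of being $\Gamma$ applied to \emph{some} evaluation function, and then produces that function by reverse induction on $n$ using that $\mathsf{BEFS}_{n-1}$-models are closed under $\Gamma$. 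This is the step I expect to be the main obstacle: the forward direction is a routine axiom-by-axiom check, but the backward direction requires genuinely exhibiting a pre-image evaluation function, and one must be careful that the finite-revision-count stratification lines up (the subtlety being that $\mathsf{BEFS}_1$ is \emph{not} ``$\mathsf{BEFS}$ with zero truth-rule applications'' but a specifically rigged pseudo-system with $\mathsf{R}^\Tr_{\PA(\Lang)}$, reflecting that after one revision every $\PA(\Lang)$-theorem has landed in $\Tr$). I would handle this by first establishing the key closure fact ``$g \in \Gamma^{m}[\mathrm{Val}_F] \Rightarrow \Gamma(g) \in \Gamma^{m+1}[\mathrm{Val}_F]$ and conversely every element of $\Gamma^{m+1}[\mathrm{Val}_F]$ is of this form'', which is immediate, and then match the syntactic side via the already-available machinery for $\FS$ (Halbach's local-validation analysis, adapted to the predicate-necessity setting by Stern), so that the truth-fragment part is quoted rather than reproved, leaving only the $\Kn$-fragment to check by hand.

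Items (b), (c), (d) are then add-ons to (a): once the biconditional of (a) is in place, each of these says that imposing a frame condition on every $R_\alpha$ is equivalent to the revised structures additionally satisfying one more axiom. For (b), if each $R_\alpha$ is reflexive then $f_\alpha(w) = \bigcap\{g(v) \mid wR_\alpha v\} \subseteq g(w)$, and since $\Gamma(g)(w) = \mathrm{Th}(w^g)$ determines $\Tr$ in $w^f$ as (essentially) $g(w)$ up to the revision, one gets $\Kn(\alpha,\phi) \to \Tr(\phi)$ in $w^f$, i.e. $\mathsf{V}$; conversely, if $\mathsf{V}$ holds in every revised world for every valid $f$, a standard pointwise argument (take $f$ concentrated so that some world's truth set excludes a sentence true at all $R_\alpha$-successors) forces reflexivity. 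For (c), transitivity of $R_\alpha$ gives $f_\alpha(w) \subseteq f_\alpha(v)$ whenever $wR_\alpha v$, which is exactly what makes $\Kn(\alpha,\phi) \to \Kn(\alpha, \udot\Kn(\udot{\mathrm{num}}(\alpha),\udot\gq(\phi)))$ (i.e. $\mathsf{In}^+$) come out true after revision — one checks that $\langle\alpha,\phi\rangle \in f^\Kn(w)$ implies $\mathrm{gc}\big(\Kn(\underline\alpha,\gq(\phi))\big) \in f_\alpha(w)$ using the interpretation of $\Kn$ induced by $g$ at each successor. For (d), the Euclidean condition $wR_\alpha v \wedge wR_\alpha u \Rightarrow vR_\alpha u$ is the precise frame correspondent of negative introspection $\mathsf{In}^-$, and the verification is the dual of (c): $\langle\alpha,\phi\rangle \notin f^\Kn(w)$ means some successor $v$ omits $\phi$, and the Euclidean property propagates ``$\phi$ is not known by $\alpha$'' to all successors, so $\udot\neg\udot\Kn(\udot{\mathrm{num}}(\alpha),\udot\gq(\phi))$ lands in $f_\alpha(w)$. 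For each converse (frame condition from validity) I would use the freedom to choose $F$ and $f$: build a small concrete agency-frame witnessing failure of the condition and a valuation in $\Gamma^n[\mathrm{Val}_F]$ whose revision refutes the axiom, which contradicts the assumed equivalence. The routine-but-fiddly part throughout (b)–(d) is keeping straight the one-step shift between ``$\Tr$/$\Kn$ in $w^f$'' and ``the sets $g(v)$'', but this is exactly the content of $f = \Gamma(g)$ and poses no real difficulty once (a) is set up cleanly.
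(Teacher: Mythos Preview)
Your overall strategy is sound and is essentially an unpacking of what the paper does by citation: the paper's proof simply observes that this is the multi-agent generalization of Stern's Theorems~4.11 and~4.12 in \cite{Ste14}, notes the axiom-by-axiom correspondence between $\mathsf{BMFS}$ and $\mathsf{BEFS}$ (with $\mathsf{V}$, $\mathsf{In}^+$, $\mathsf{In}^-$ matching Stern's $T'$, $4'$, $E'$ via $\mathsf{IA}$ and $\UCT^\neg$), handles $\textsf{Non-triviality}$ and $\Kn^1$-$\Kn^2$ from the setup, and then reduces to Stern's argument by treating the binary $\Kn$ as a family $\{\Kn_\alpha\}_{\alpha\in G}$ of unary predicates with associated accessibility relations $R_\alpha$. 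Your induction-on-$n$ with axiom-by-axiom verification in the forward direction, and your acknowledgement that the $\FS$/truth part should be quoted from Halbach--Stern, is exactly how that cited argument runs; so the approaches agree, yours is just longer-form.

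Two points to correct. First, you misread the logical shape of (b)--(d): each item has the frame condition as a \emph{hypothesis}, not as part of the biconditional. There is no claim that validity of $\mathsf{V}$ (resp.\ $\mathsf{In}^\pm$) in all revised worlds forces reflexivity (resp.\ transitivity, Euclideanness) of $R_\alpha$; indeed $F$ is fixed throughout, so your plan to ``use the freedom to choose $F$'' for those converses is both unnecessary and unavailable. Given (a), the $\Leftarrow$ direction of (b)--(d) is immediate, and only the $\Rightarrow$ direction needs the frame-theoretic check you sketch. Second, the formula you wrote for $\mathsf{In}^+$ is $\mathsf{U4}$, not $\mathsf{In}^+$: the antecedent in $\mathsf{In}^+$ is $\Kn\big(\alpha,\udot\Tr(\udot\gq(\phi))\big)$, not $\Kn(\alpha,\phi)$. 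This matters because $\mathsf{U4}$ is inconsistent with $\KT$ (Proposition~\ref{Prop:U4_incon}); the semantic verification you want goes through the predecessor valuation $g$ (with $f=\Gamma(g)$), where $\Kn\big(\alpha,\udot\Tr(\udot\gq(\phi))\big)$ in $w^f$ unwinds to $\mathrm{gc}(\phi)\in g(v)$ for all $v$ with $wR_\alpha v$, and transitivity then pushes this down one more accessibility step to yield the consequent. Your frame-correspondence intuition is right, but keep the one-step $f$/$g$ shift straight and use the actual form of $\mathsf{In}^\pm$.
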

\begin{proof}
I explain how to generalize the proofs of Theorems 4.11 and 4.12 in \cite{Ste14} to the multi-agent setting. First note that the axioms and rules of $BMFS$ from \cite{Ste14} are directly generalized to corresponding multi-agent axioms of $\mathsf{BEFS}$. Moreover, $\mathsf{V}$ is the direct generalization of $T'$ from \cite{Ste14} to the multi-agent setting. Using $\mathsf{IA}$ and $\mathsf{UCT}^\neg$, it is seen that the axioms $\mathsf{In}^+$ and $\mathsf{In}^-$ are equivalent to the direct generalizations of $4'$ and $E'$ in \cite{Ste14} to the multi-agent setting, respectively. 

$\textsf{Non-triviality}$ follows from that $G$ is non-empty. $\Kn^1$-$\Kn^2$ follows from the interpretation of $\Kn^1$ in the definition of $w^f$ (for any $w \in W$). The other axioms concerned are all universally quantified over $\Ag$. We generalize from Stern's single unary necessity predicate $\mathrm{N}$ to a countable set $\{\Kn_\alpha \mid \alpha \in G\}$ of unary knowledge predicates (one for each agent). Note that for a standard model, interpretations of $\Ag$ and the binary knowledge predicate $\Kn$ induce a set $G$ of agents and an interpretation of $\Kn_\alpha$ for each $\alpha \in G$; and vice versa. Letting $\alpha \in G$ be arbitrary, the verification of the axioms then proceeds exactly as in \cite{Ste14}, for the single unary predicate $\Kn_\alpha$ and the single accessibility relation $R_\alpha$.
\end{proof}

The left-to-right directions of the above theorem show that for any finite fragment of $\mathsf{BEFS} + \mathsf{V} + \mathsf{In}^+ + \mathsf{In}^-$, it is satisfied by all the worlds in the $\Lang$-agency-frame induced by an evaluation function of $F$ obtained by a finite number of revisions of an arbitrary evaluation function of $F$. The following lemmata are provided for the sake of extending this result to the axiom $\mathsf{R}_\DCB$.

Let $R \subseteq A \times A$. $R$ is {\em left-total} if for each $a \in A$ there is $b \in A$, such that $a R b$. Note that if $R$ is reflexive, then it is left-total.

\begin{lemma}\label{Lemma:DCB_starting_point}
If for all $\alpha \in G$, $R_\alpha$ is left-total, and $f$ is an evaluation function, such that for all $w \in W$, there is an $\Lang$-system $S_w$ extending $\DCB$, such that $f(w) = \big\{ \mathrm{gc}(\sigma) \mid S_w \vdash \sigma \big\}$, then for all $w \in W$, $w^f \models \DCB$. 
\end{lemma}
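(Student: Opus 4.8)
The plan is to verify each axiom of $\DCB$ in $w^f$ separately, exploiting the fact that $f(w)$ consists exactly of the G\"odel codes of $S_w$-theorems, together with the fact that the worlds $v$ with $w R_\alpha v$ are themselves of the same form. First I would unwind the definitions: for any world $w$, $\Tr^{w^f} = f(w) = \{\mathrm{gc}(\sigma) \mid S_w \vdash \sigma\}$, while $\Kn^{2,w^f} = f^\Kn(w) = \{\langle \alpha, \phi\rangle \mid \alpha \in G,\ \phi \in \bigcap\{f(v) \mid w R_\alpha v\}\}$, and $\Kn^{1,w^f}$ is the set of $\phi$ belonging to $f_\alpha(w)$ for every $\alpha \in G$. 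The point to keep in mind throughout is that left-totality of $R_\alpha$ guarantees that $\{v \mid w R_\alpha v\}$ is non-empty, so $f_\alpha(w)$ is an intersection of a non-empty family of deductively closed sets, hence itself deductively closed and consistent-looking from inside.

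The four axioms are then checked as follows. \textbf{Non-triviality} is immediate since $G$ is non-empty and $\Ag^{w^f} = G$. \textbf{$\Kn^1$-$\Kn^2$} is immediate from the definition of $\Kn^1$ in $w^f$ as $\{\phi \mid \forall \alpha \in G\, (\langle\alpha,\phi\rangle \in f^\Kn(w))\}$ — this is literally the set of sentences known by all agents, provided we also check the arithmetical side conditions $\udot\Lang^0$ are correctly computed, which holds because $w$ expands $\mathbb{N}$ and models $\Base$. For \textbf{$\mathsf{UK}^\Kn$}: if $w^f \models \Kn(\alpha,\phi) \wedge \Kn(\alpha, \phi \udot\rightarrow \psi)$, then for every $v$ with $w R_\alpha v$ we have $\phi, \phi\to\psi \in f(v) = \{$theorems of $S_v\}$; since $S_v$ is closed under modus ponens, $\psi \in f(v)$; as this holds for all such $v$ (and there is at least one), $\psi \in f_\alpha(w)$, i.e. $w^f \models \Kn(\alpha,\psi)$. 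Here I would also need the routine observation that $w$ correctly decodes $\udot\rightarrow$, so that the "internal" implication $\phi \udot\rightarrow \psi$ denotes the genuine conditional sentence. For \textbf{$\mathsf{R}_\DCB$}: suppose $w^f \models \Pr_\DCB(\phi)$, i.e. the number $\phi$ codes a sentence provable in $\DCB$ (again using that $w$ models $\Base$, so $\Pr_\DCB$ is computed correctly, and that $\DCB$'s provability predicate is $\Sigma_1$-represented). Since each $S_v$ extends $\DCB$, that sentence is provable in $S_v$, so $\phi \in f(v)$ for every $v$ with $w R_\alpha v$; as there is at least one such $v$, $\phi \in f_\alpha(w)$, hence $w^f \models \Kn(\alpha,\phi)$. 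Doing this for all $\alpha \in G$ gives the universally quantified axiom.

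The main obstacle — really the only delicate point — is the self-referential nature of $\mathsf{R}_\DCB$. One must be careful that "$\mathsf{R}_\DCB$ holds in $w^f$" is verified using the \emph{correct} provability predicate: $\Pr_\DCB$ is the formula obtained from the G\"odel fixed-point construction, and its extension in $w^f$ (which expands $\mathbb{N}$) is precisely $\{\mathrm{gc}(\sigma) \mid \DCB \vdash \sigma\}$, because provability in the recursively enumerable theory $\DCB$ is faithfully represented over $\Base = \PA(\Lang)$. Given that, the fixed-point character of $\mathsf{R}_\DCB$ causes no circularity in the \emph{semantic} verification: we are not defining $f(w)$ by reference to $w^f \models \mathsf{R}_\DCB$, but rather checking, for a given $f$ of the stated form, that the sentence happens to be true; the hypothesis "$S_w$ extends $\DCB$" is exactly what feeds the implication through. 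I would state this decoding fact once at the start of the proof and then let each axiom verification be a one-line appeal to deductive closure / modus-ponens-closure / extension of $\DCB$, together with non-emptiness of $\{v \mid w R_\alpha v\}$ from left-totality.
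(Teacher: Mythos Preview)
Your proposal is correct and follows essentially the same route as the paper's proof: verify $\mathsf{UK}^\Kn$ via modus-ponens closure of each $S_v$ (using left-totality so that the intersection defining $f_\alpha(w)$ is over a non-empty family), and verify $\mathsf{R}_\DCB$ by using standardness of $w$ to ensure that $\Pr_\DCB$ picks out exactly the genuine $\DCB$-theorems, which then land in every $f(v)$ because each $S_v$ extends $\DCB$. Your write-up is in fact more explicit than the paper's---you also check $\textsf{Non-triviality}$ and $\Kn^1$-$\Kn^2$ (which the paper leaves implicit), you correctly point to $S_v$ rather than $S_w$ in the $\mathsf{UK}^\Kn$ step, and your remark that the fixed-point character of $\mathsf{R}_\DCB$ causes no circularity in the \emph{semantic} verification is a helpful clarification that the paper omits.
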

\begin{proof}
Let $w \in W$ be arbitrary. Since the theorems of $S_w$ are closed under modus ponens and $R_\alpha$ is left-total for all $\alpha \in G$, we have that $w^f \models \mathsf{UK}^\Kn$. Since $w$ is standard, we have for all $\sigma \in \Lang^0$ that $S_w \vdash \sigma \iff w \models \Pr_{S_w}(\gquote{\sigma})$. So since $S_w \vdash \DCB$, $f(w) = \big\{ \mathrm{gc}(\sigma) \mid S_w \vdash \sigma \big\}$, and $R_\alpha$ is left-total for all $\alpha \in G$, we have that $w^f \models \mathsf{R}_\DCB$.
\end{proof}

\begin{lemma}\label{Lemma:DCB_stable}
Let $f \in \mathrm{Val}_F$, and assume that for all $\alpha \in G$, $R_\alpha$ is left-total, and that for all $w \in W$, $w^f \models \DCB$. Then for all $n < \omega$, and for all $w \in W$, $w^{\Gamma^n(f)} \models \DCB$.
\end{lemma}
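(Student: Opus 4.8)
The plan is an induction on $n$ whose only substantive content is the step case: if $g \in \mathrm{Val}_F$ is an evaluation function with $w^g \models \DCB$ for every $w \in W$, then $w^{\Gamma(g)} \models \DCB$ for every $w \in W$. Granting this, the base case $n = 0$ is precisely the hypothesis of the lemma, and the inductive step is immediate since $\Gamma^{n+1}(f) = \Gamma\big(\Gamma^n(f)\big)$ with $\Gamma^n(f) \in \mathrm{Val}_F$.

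For the step, I would fix such a $g$ and, for each $w \in W$, let $T_w$ denote the complete $\Lang$-theory $\mathrm{Th}(w^g)$. By the definition of the revision semantic operator, $\big(\Gamma(g)\big)(w) = \{\mathrm{gc}(\phi) \mid w^g \models \phi\}$ is exactly the set of G\"odel codes of the sentences of $T_w$; moreover each $T_w$ is deductively closed, and by the assumption on $g$ we have $\DCB \subseteq T_w$ (indeed $T_w$ is complete and consistent). I would then verify the four axioms of $\DCB$ in $w^{\Gamma(g)}$, for arbitrary $w$, just as in the proof of Lemma \ref{Lemma:DCB_starting_point} but with $T_w$ playing the role of the system $S_w$ there: \textsf{Non-triviality} holds as $G \neq \emptyset$; $\Kn^1$-$\Kn^2$ holds by the stipulated interpretation of $\Kn^1$ in any $w^h$; $\mathsf{UK}^\Kn$ holds because $\big(\Gamma(g)\big)_\alpha(w)$ is an intersection (over $\{v \mid w R_\alpha v\}$, non-empty by left-totality) of code sets of deductively closed theories and hence is itself closed under modus ponens; and $\mathsf{R}_\DCB$ holds because $w^{\Gamma(g)}$ expands $\mathbb{N}$, so $w^{\Gamma(g)} \models \Pr_\DCB(\gquote{\phi})$ iff $\DCB \vdash \phi$, in which case $\phi \in T_v$ for every $v$ with $w R_\alpha v$ (as $T_v$ is deductively closed and extends $\DCB$), whence $\langle \alpha, \mathrm{gc}(\phi) \rangle$ belongs to the interpretation of $\Kn^2$ in $w^{\Gamma(g)}$.

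The point that needs care, and the reason one should not invoke Lemma \ref{Lemma:DCB_starting_point} as a black box, is that $T_w = \mathrm{Th}(w^g)$ is in general \emph{not} recursively enumerable (it contains all true $\Lang_\PA$-sentences), so it is not an $\Lang$-system in the sense of \S\ref{Sec:Prel}. What rescues the argument is that the proof of Lemma \ref{Lemma:DCB_starting_point} never uses recursive enumerability of the $S_w$: it uses only that each $S_w$ is closed under modus ponens and contains $\DCB$, together with left-totality of the $R_\alpha$ and the standardness of the worlds (the latter supplying $w \models \Pr_\DCB(\gquote{\phi}) \Leftrightarrow \DCB \vdash \phi$, which is legitimate because $\DCB$, being fixed and recursively enumerable, does have a provability predicate). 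Accordingly, the cleanest write-up first restates Lemma \ref{Lemma:DCB_starting_point} with ``deductively closed set of $\Lang$-sentences extending $\DCB$'' in place of ``$\Lang$-system extending $\DCB$'' — the proof being verbatim the same — and then applies this restated version to the evaluation function $\Gamma^{n+1}(f)$ with the family $(T_w)_{w \in W}$.
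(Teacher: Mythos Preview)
Your proof is correct and takes essentially the same inductive approach as the paper; the only cosmetic difference is that the paper invokes Theorem \ref{Thm:RevisionBEFS} to obtain $\mathsf{UK}^\Kn$ and $\Kn^1$-$\Kn^2$ at the inductive step, whereas you verify them directly from the deductive closure of the theories $T_v$. Your observation that the inductive step is precisely Lemma \ref{Lemma:DCB_starting_point} with the recursive-enumerability requirement on the $S_w$ dropped (a hypothesis its proof never uses) is a tidy unification that the paper leaves implicit.
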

\begin{proof}
We prove this by induction. Let $w \in W$ be arbitrary. By assumption, $w^f \models \DCB$. Suppose that $w^{\Gamma^n(f)} \models \DCB$, for some $n < \omega$. Then $\big\{ \mathrm{gc}(\sigma) \mid \DCB \vdash \sigma \big\} \subseteq \big(\Gamma^{n+1}(f)\big)(w)$. Thus, for each $\alpha \in G$, it follows from left-totality of $R_\alpha$ that $\big\{ \big\langle \alpha, \mathrm{gc}(\sigma) \big\rangle \mid \DCB \vdash \sigma \big\} \subseteq \big(\Gamma^{n+1}(f)\big)^\Kn(w)$. Moreover, since $w$ is standard, we have for all $\sigma \in \Lang^0$ that $\DCB \vdash \sigma \iff w \models \Pr_\DCB(\gquote{\sigma})$. Combining these facts, we get that $w^{\Gamma^{n+1}(f)} \models \mathsf{R}_\DCB$. Moreover, it follows from Theorem \ref{Thm:RevisionBEFS} that for all $k \geq 1$, $w^{\Gamma^k(f)} \models \Kn^1\text{-}\Kn^2 + \mathsf{UK}^\Kn$. So $w^{\Gamma^{n+1}(f)} \models \DCB$, as desired.
\end{proof}

\begin{thm}\label{Thm:KT_semantics}
Let $f$ be an evaluation function, such that for all $w \in W$, there is an $\Lang$-system $S_w \vdash
\DCB$, such that $f(w) = \big\{ \mathrm{gc}(\sigma) \mid S_w \vdash \sigma \big\}$. 
\begin{enumerate}[{\rm (a)}]
\item If $R_\alpha$ is reflexive for all $\alpha \in G$, then for each finite $\Phi \subseteq \KT + \mathsf{UBF} + \mathsf{IA}$, there is $m < \omega$, such that for all $n \geq m$ and all $w \in W$, 
\begin{align*}
w^{\Gamma^n(f)} \models \Phi .
\end{align*}
\item If $R_\alpha$ is reflexive and transitive for all $\alpha \in G$, then for each finite $\Phi \subseteq \KT + \mathsf{UBF} + \mathsf{IA} + \mathsf{In}^+$, there is $m < \omega$, such that for all $n \geq m$ and all $w \in W$, 
\begin{align*}
w^{\Gamma^n(f)} \models \Phi .
\end{align*}
\item If $R_\alpha$ is reflexive, transitive and Euclidean for all $\alpha \in G$, then for each finite $\Phi \subseteq \KT + \mathsf{UBF} + \mathsf{IA} + \mathsf{In}^+ + \mathsf{In}^-$, there is $m < \omega$, such that for all $n \geq m$ and all $w \in W$, 
\begin{align*}
w^{\Gamma^n(f)} \models \Phi .
\end{align*}
\end{enumerate}
\end{thm}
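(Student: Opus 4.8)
The plan is to transfer the problem to the system $\mathsf{BEFS}$ via Proposition~\ref{Prop:KT_proves_BEFS}, and then to combine Theorem~\ref{Thm:RevisionBEFS} (which handles the $\mathsf{BEFS}$-part together with whichever of $\mathsf{V},\mathsf{In}^+,\mathsf{In}^-$ the frame conditions license) with Lemmata~\ref{Lemma:DCB_starting_point} and~\ref{Lemma:DCB_stable} (which handle the axiom $\mathsf{R}_\DCB$), all the while keeping track of how many applications of $\NEC^\Tr$ and $\CONEC^\Tr$ a given finite proof can consume. The three cases are treated uniformly: they differ only in which supplementary axiom Theorem~\ref{Thm:RevisionBEFS} supplies --- reflexivity gives $\mathsf{V}$ by part~(b), adding transitivity gives $\mathsf{In}^+$ by part~(c), adding the Euclidean property gives $\mathsf{In}^-$ by part~(d) --- whereas $\mathsf{UBF}$ and $\mathsf{IA}$ are already axioms of $\mathsf{BEFS}$ and so are available in every case.

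First I would record that $\mathsf{R}_\DCB$ holds in every world at every revision stage. In all three cases each $R_\alpha$ is reflexive, hence left-total, and by hypothesis $f(w)=\{\mathrm{gc}(\sigma)\mid S_w\vdash\sigma\}$ with $S_w\supseteq\DCB$; so Lemma~\ref{Lemma:DCB_starting_point} gives $w^f\models\DCB$ for all $w\in W$, and then Lemma~\ref{Lemma:DCB_stable} gives $w^{\Gamma^k(f)}\models\DCB$ for all $k<\omega$ and all $w\in W$. In particular $w^{\Gamma^k(f)}\models\mathsf{R}_\DCB$ for every $k$.

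Next, given a finite $\Phi$ contained in $\KT+\mathsf{UBF}+\mathsf{IA}+\mathsf{In}^++\mathsf{In}^-$ (taking the largest case), Proposition~\ref{Prop:KT_proves_BEFS} --- whose equivalence plainly persists when $\mathsf{In}^+$ and $\mathsf{In}^-$ are adjoined to both sides --- shows that each $\sigma\in\Phi$ is a theorem of $\mathsf{BEFS}+\mathsf{V}+\mathsf{R}_\DCB+\mathsf{In}^++\mathsf{In}^-$. Since $\Phi$ is finite, collecting the finitely many proofs of its members yields an $m\geq1$ such that each $\sigma\in\Phi$ has a proof in that system using at most $m-1$ applications of $\NEC^\Tr$ and $\CONEC^\Tr$ in total (each application of the derived rule $\NEC^\Kn$ costs one $\NEC^\Tr$, while $\Tr/\Kn$ and the first-order rules cost nothing); that is, $\Phi$ is contained in the pseudo-system $\mathsf{BEFS}_m+\mathsf{V}+\mathsf{R}_\DCB+\mathsf{In}^++\mathsf{In}^-$. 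I claim $m$ works as the threshold. Indeed, for $n\geq m$ we have $\Gamma^n(f)=\Gamma^m(\Gamma^{n-m}(f))\in\Gamma^m[\mathrm{Val}_F]$, so Theorem~\ref{Thm:RevisionBEFS}(b)--(d) (with parameter $m$, using the relevant frame conditions) gives in each world $w^{\Gamma^n(f)}\models\mathsf{BEFS}_m+\mathsf{V}$ in case~(a), or $+\,\mathsf{In}^+$ in case~(b), or $+\,\mathsf{In}^++\mathsf{In}^-$ in case~(c); together with $w^{\Gamma^n(f)}\models\mathsf{R}_\DCB$ from the previous paragraph, $w^{\Gamma^n(f)}$ is a model of the pseudo-system that contains $\Phi$, and hence $w^{\Gamma^n(f)}\models\Phi$.

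The delicate point, and the one I expect to demand the most care, is the composition at the end: Theorem~\ref{Thm:RevisionBEFS} is stated for $\mathsf{BEFS}_m$ augmented by a single supplementary axiom, and $\mathsf{R}_\DCB$ is furnished by a separate pair of lemmata, so one must verify that a single revision stage $n\geq m$ simultaneously validates $\mathsf{BEFS}$, every supplementary axiom permitted by the frame conditions, and $\mathsf{R}_\DCB$, while still tolerating up to $m-1$ applications of the truth rules inside the proofs of members of $\Phi$. This works for structural reasons: the proof of Theorem~\ref{Thm:RevisionBEFS} checks each axiom pointwise in $w^{\Gamma^n(f)}$ and charges each truth-rule application exactly one further revision, an analysis unaffected by the presence of the other axioms; and $\mathsf{R}_\DCB$, holding at \emph{every} stage by Lemma~\ref{Lemma:DCB_stable}, carries no revision cost at all. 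Spelling this bookkeeping out is the only substantive work; everything else is assembly.
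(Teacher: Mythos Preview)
Your proposal is correct and follows essentially the same route as the paper, whose proof reads in full: ``This result is immediate from combining Theorem~\ref{Thm:RevisionBEFS}, Lemma~\ref{Lemma:DCB_starting_point} and Lemma~\ref{Lemma:DCB_stable}.'' You have simply unpacked that sentence: you make explicit the passage through Proposition~\ref{Prop:KT_proves_BEFS} (which the paper leaves implicit), the bound $m$ on truth-rule applications coming from the finiteness of $\Phi$, and the observation that $\mathsf{R}_\DCB$---holding at every stage by the two lemmata---can be adjoined to $\mathsf{BEFS}_m+\mathsf{V}(+\mathsf{In}^+ +\mathsf{In}^-)$ without disturbing the revision bookkeeping in the proof of Theorem~\ref{Thm:RevisionBEFS}. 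Your identification of this last point as the only place requiring care is accurate, and your structural explanation (axioms are checked pointwise at each stage $\geq 1$; each truth-rule application costs one revision; an axiom valid at every stage costs nothing) is the right one.
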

\begin{proof}
This result is immediate from combining Theorem \ref{Thm:RevisionBEFS}, Lemma \ref{Lemma:DCB_starting_point} and Lemma \ref{Lemma:DCB_stable}.
\end{proof}

\begin{cor}\label{Cor:KT_consistent}
$\KT + \mathsf{UBF} + \mathsf{IA} + \mathsf{In}^+ + \mathsf{In}^-$ is consistent.
\end{cor}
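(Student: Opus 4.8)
The strategy is to derive the Corollary as an immediate consequence of Theorem \ref{Thm:KT_semantics}(c): it suffices to exhibit a single $\Lang_-$-agency-frame $F$ and evaluation function $f$ satisfying the hypotheses of that theorem with $R_\alpha$ reflexive, transitive and Euclidean, since then every finite fragment of $\KT + \mathsf{UBF} + \mathsf{IA} + \mathsf{In}^+ + \mathsf{In}^-$ acquires a standard model, whence the whole system is consistent. First I would fix $G = \{0\}$ and take $W = \{w_0\}$, where $w_0$ is the $\Lang_-$-expansion of the standard model $\mathbb{N}$ interpreting $\Ag$ by $G$ (and $U, u$ in the canonical way), and put $R_0 = \{\langle w_0, w_0\rangle\}$. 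On a one-element set this relation is reflexive, transitive, Euclidean and left-total, so all the frame hypotheses used in Theorem \ref{Thm:KT_semantics} and in Lemmata \ref{Lemma:DCB_starting_point} and \ref{Lemma:DCB_stable} are met. Then I would take $f \in \mathrm{Val}_F$ with $f(w_0) = \{\mathrm{gc}(\sigma) \mid \DCB \vdash \sigma\}$; choosing $S_{w_0} = \DCB$ (which trivially extends $\DCB$) shows that $f$ satisfies the standing hypothesis of Theorem \ref{Thm:KT_semantics}.

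With $F$ and $f$ so chosen, Theorem \ref{Thm:KT_semantics}(c) yields, for every finite $\Phi \subseteq \KT + \mathsf{UBF} + \mathsf{IA} + \mathsf{In}^+ + \mathsf{In}^-$, an $m < \omega$ with $w_0^{\Gamma^n(f)} \models \Phi$ for all $n \geq m$; in particular each such $\Phi$ has a standard model. To finish, I would argue by contradiction using the standard finitary fact that a putative proof of $\bot$ in the full system invokes only finitely many axiom instances and only finitely many applications of the rules $\NEC^\Tr$, $\CONEC^\Tr$ and $\NEC^\Kn$, and hence lives inside one of the pseudo-systems $\mathsf{BEFS}_n$, $\mathsf{EFS}_n$ (or the refinements of them used in proving Theorem \ref{Thm:KT_semantics}) together with finitely many extra axiom instances; choosing $n$ above the bound $m$ supplied by Theorem \ref{Thm:KT_semantics}(c) for that fragment makes every line of the derivation true in $w_0^{\Gamma^n(f)}$, so $w_0^{\Gamma^n(f)} \models \bot$, which is absurd.

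The routine parts are checking that $w_0 \models \Base = \PA(\Lang)$ (the standard model satisfies $\PA$ together with the representation axioms of $\Base$) and that $R_0$ has the required closure properties on a singleton. The only point requiring care — and the reason Theorem \ref{Thm:KT_semantics} is stated with the ``for all $n \geq m$'' quantifier and proved via pseudo-systems indexed by a bound on the number of rule applications — is matching the deductive rules of the system against revision steps of $\Gamma$: each application of $\NEC^\Tr$ or $\NEC^\Kn$ is absorbed by moving up one revision and $\CONEC^\Tr$ by moving down one, so that a derivation using $k$ rule applications is faithfully reflected in $w_0^{\Gamma^n(f)}$ once $n$ exceeds $k$ plus the bound needed to satisfy the finitely many axioms used. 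Since this bookkeeping is already discharged inside Theorem \ref{Thm:KT_semantics}, the Corollary reduces to the short argument above, and I expect no further obstacle.
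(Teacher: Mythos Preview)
Your proposal is correct and follows essentially the same approach as the paper, which simply states that the result follows from Theorem \ref{Thm:KT_semantics} and the soundness of first-order logic. You have merely spelled out the two details the paper leaves to the reader: exhibiting a concrete frame and evaluation function meeting the hypotheses of Theorem \ref{Thm:KT_semantics}(c) (your singleton construction is the obvious choice), and the compactness-style observation that any proof of $\bot$ uses only a finite fragment and hence is refuted by the standard model supplied by that theorem.
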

\begin{proof}
This follows from Theorem \ref{Thm:KT_semantics} and the soundness of first-order logic.
\end{proof}

\section{Conclusion and suggestions for further research}

I have exhibited a genuinely untyped solution to the knower paradoxes, which has several distinct advantages. Notably, the framework accommodates a definition of common belief/knowledge by a G\"odel fixed-point formula. In particular, I showed that reasonable axioms of deductive closure for belief suffice to establish that common belief/knowlegde has these deductive closure properties. Thus, an explanation of common knowledge in terms of ordinary knowledge is obtained.

Stern's revision semantics for necessity was generalized to multi-agent knowledge. This semantics generalizes both the traditional Hintikka-style possible worlds semantics for knowledge, and the revision semantics for truth. It turns out that this semantics validates the axiom $\mathsf{IA}$, and is thus suited to applications where it is assumed that all agents know each theorem of the system concerning syntax. An avenue for further research is to investigate whether Stern's semantics can be further generalized to accommodate failures of $\mathsf{IA}$ (see the explanation for how $\KT$ meets Target feature \ref{tgtfeat:Introspec} in \S \ref{Sec:Theory_KT}).

Another avenue for further research is to look for more applications of the techniques utilized in \S \ref{Sec:DCB_CK_well-behaved}. In particular, are there other useful definitions to be made as G\"odel fixed-points, and are there positive applications of L\"ob's theorem to be obtained concerning them?

\section{Acknowledgments}

I would like to extend my appreciation to Rasmus Blanck, Volker Halbach, Graham Leigh, and {\O}ystein Linnebo for their insights, guidance and constructive feedback that greatly contributed to the development of this paper. This research was supported by the Knut and Alice Wallenberg Foundation (KAW) [2015.0179], and by the Swedish Research Council (VR) [2020-00613].

\end{document}